\newtheorem{lem}{Lemma}
\newtheorem{lemma}[lem]{Lemma}
\newtheorem{thm}{Theorem}
\newtheorem{theorem}[thm]{Theorem}
\def\\{\cr}
\def\({\left(}
\def\){\right)}
\def\[{\left[}
\def\]{\right]}
\def\<{\langle}
\def\>{\rangle}
\begin{document}

\title{On the transcendence of a series related to Sturmian words}

\author{
{\sc Florian~Luca}\\
{School of Mathematics, University of the Witwatersrand}\\
{Private Bag 3, Wits 2050, South Africa}\\
{Research Group in Algebraic Structures and Applications}\\
{King Abdulaziz University, Jeddah, Saudi Arabia}\\
{Max-Planck Institute for Software Systems, Saarbr\"ucken, Germany}\\
{florian.luca@wits.ac.za}
\and
{\sc Jo\"el Ouaknine}\\
{Max Planck Institute for Software Systems}\\
{Saarland Informatics Campus, Saarbr\"ucken, Germany  }\\
{joel@mpi-sws.org}
\and
{\sc James Worrell}\\
{Department of Computer Science}\\
{University of Oxford, Oxford, OX13QD, UK}\\
{jbw@cs.ox.ac.uk}}

\date{\today}

\pagenumbering{arabic}

\maketitle

\begin{abstract}
  Let $b$ be an algebraic number with $|b|>1$ and $\mathcal{H}$ a
  finite set of algebraic numbers.  We study the transcendence of
  numbers of the form $\sum_{n=0}^\infty \frac{a_n}{b^n}$ where
  $a_n \in \mathcal{H}$ for all $n\in\mathbb{N}$.  We assume that the
  sequence $(a_n)_{n=0}^\infty$ is generated by coding the orbit of a
  point under an irrational rotation of the unit circle.  In
  particular, this assumption holds whenever the sequence is Sturmian.
  Our main result shows that, apart from some trivial exceptions, all
  numbers of the above form are transcendental.  We moreover give
  sufficient conditions for a finite set of such numbers to be
  linearly independent over~$\overline{\mathbb{Q}}$.
\end{abstract}

\section{Introduction}
 
For an integer $b\geq 2$, the $b$-ary expansion of a rational number
is eventually periodic.  Over the last few decades, a number of
results have emerged to the effect that an irrational number whose
$b$-ary expansion has low complexity must be transcendental.  For
example, Ferenczi and Maduit~\cite{FM} proved the transcendence of every
irrational number whose $b$-ary expansion is Sturmian.  Recall that
Sturmian words are those with minimal subword complexity among
non-eventually-periodic words.  Indeed, let $p(n)$ denote the number
of distinct length-$n$ factors, then an infinite word is Sturmian if
$p(n)=n+1$ for all $n$, whereas a word is eventually periodic iff it
satisfies $p(n) \leq n$ for some $n$.  The above-mentioned result
of~\cite{FM} was strengthened by Adamczewski, Bugeaud, and
Luca~\cite{ABL}, who showed that if the $b$-ary expansion of an
irrational number has linear subword complexity, i.e., it satisfies
$ \liminf_{n\rightarrow \infty} \frac{p(n)}{n} < \infty \, , $ then
the number must be transcendental.  The approach of~\cite{ABL,FM} can
be refined to derive transcendence measures based on certain
combinatorial characteristics of the $b$-ary expansion of a given
number (see~\cite{AB2,BK}).  In another direction,
Adamczewski~\cite{BorisA} has given lower bounds on the subword
complexity of the $b$-ary expansion of certain transcendental
exponential periods.

Our aim in this paper is to prove transcendence results for numbers of
the form $\sum_{n=0}^\infty \frac{a_n}{b^n}$, where $b$ is complex
algebraic with $|b|>1$, and the $a_n$ are drawn from a finite set
$\mathcal H$ of algebraic numbers.  The assumption that we place on
the sequence $(a_n)_{n=0}^\infty$ is a generalisation of the Sturmian
property---namely that the sequence be the coding of an irrational
rotation on the unit circle.  Roughly speaking, this means that there
is an irrational number $\theta$ and a partition of the unit circle
into finitely many disjoint intervals such that $a_n$ is determined by
the interval containing $n\theta \bmod 1$ for all $n\in\mathbb{N}$.
Morse and Hedlund~\cite{MH} showed that all Sturmian words over a
two-letter alphabet arise as codings of a rotation into two intervals
of respective lengths $\theta$ and $1-\theta$; moreover it is known
that such codings have affine subword complexity function $p(n)=cn+d$
for all sufficiently large~$n$~\cite{BV,ROTE}.  A special case of our
main result is that $\sum_{n=0}^\infty \frac{a_n}{b^n}$ is
transcendental whenever $(a_n)_{n=0}^\infty$ is the coding of a
rotation (and hence whenever $(a_n)_{n=0}^\infty$ is Sturmian).

A key difference between the aforementioned transcendence results
of~\cite{ABL,FM} and the setting of this paper is that our base $b$ is
allowed to be any algebraic number with $|b|>1$ and our set of digits
$\mathcal H$ is allowed to be an arbitrary set of algebraic numbers
rather than $\{0,1,\ldots,b-1\}$.  We note that Adamczewski and
Bugeaud~\cite{AB1} were able to slightly generalise the transcendence
criterion from~\cite{ABL} to accommodate the situation of a base $b>1$
that is a Pisot or Salem number.  The papers~\cite{ABL,AB1,FM} use
$p$-adic versions of Roth's Theorem and the Subspace Theorem.  The
Mahler method has also been used to establish transcendence of numbers
of the form $\sum_{n=0}^\infty \frac{a_n}{b^n}$ for
$(a_n)_{n=0}^\infty$ a non-ultimately periodic automatic sequence and
$b>1$ real algebraic.  An apparent limitation of this approach, as
pointed out by Becker~\cite{Becker}, is that it only appears to work
when $b$ is sufficiently large in terms of $(a_n)_{n=0}^\infty$.

We now introduce the technical setting of our main results.  Let $b$
be complex algebraic with $|b|>1$, and $\theta$ be real
irrational.  Let $\ell\ge 1$ and
\begin{equation}
\label{eq:A}
A=\{r_1,\ldots,r_{\ell}\}\subset (0,1). 
\end{equation}
We assume that $r_1<\cdots<r_{\ell}$ and put $r_0:=0,~r_{\ell+1}:=1$. For a subset ${\mathcal B}\subset [0,1]$  let  
\begin{equation}
\delta_{\mathcal B}(n):={\bf 1}_{{\mathcal N}_{\mathcal B}}\quad {\text{\rm with}}\quad {\mathcal N}_{\mathcal B}:=\{n\ge 0: \{n\theta\}\in {\mathcal B}\}.
\end{equation}
Let ${\bf u}:=(u_0,u_1,\ldots,u_{\ell})\in {\overline{\mathbb Q}}^{\ell+1}$. 
Put
\begin{equation}
\label{eq:T}
T(b,\theta,{A},{\bf u}):=\sum_{n\ge 0}\sum_{i=0}^{\ell} \frac{u_{i}\delta_{[r_i,r_{i+1}]}(n)}{b^n}.
\end{equation}
Our aim is to study conditions under which  $T(b,\theta,A,{\bf u})$ is transcendental. Note that 
\begin{eqnarray*}
T(b,\theta,A,{\bf u}) &=& \sum_{n\ge 0} \sum_{i=0}^{\ell}
                          \frac{u_i(\delta_{[0,r_{i+1}]}(n)-\delta_{[0,r_i]}(n))}{b^n}\\
  &=& \sum_{i=0}^{\ell} \sum_{n\geq 0}
(u_i-u_{i+1})\frac{\delta_{[0,r_{i+1}]}(n)}{b^n}
      \, ,
      \end{eqnarray*}
where $u_{\ell+1}:=0$. The last term in the above sum on the right (when $i=\ell$) is $u_{\ell}\sum_{n\ge 0} 1/b^n=u_{\ell}b/(b-1)$. So, we see that if $u_i=u_{i+1}$ for $i=0,1,\ldots,\ell-1$, 
then $T(b,\theta,A,{\bf u})\in {\overline{{\mathbb Q}}}$. So, we assume that there exists $i\in \{0,1,\ldots,\ell-1\}$ such that $u_i\ne u_{i+1}$. In fact, we may assume that this condition holds for all 
$i=0,\ldots,\ell-1$, for if this condition fails for $i=j\in \{0,\ldots,\ell-1\}$, then we can work with the set $A\backslash \{r_j\}$ (so, we eliminate $r_j$ from $A$). 

In addition, we also assume that 
\begin{equation}
\label{eq:C}
 r_j-r_i\not\in {\mathbb Z}\theta+{\mathbb Z}\quad {\text{for}}\quad 1\le i\ne j\le \ell.
\end{equation}
This does not restrict the generality of our problem. 
Indeed, assume say that $r_j=r_i+v\theta+u$ for some $u,~v\in {\mathbb Z}$. We may suppose that $v\ge 0$ otherwise we swap $r_i$ and $r_j$. Then 
$$
n\theta-r_j\equiv (n-v)\theta-r_i\pmod 1.
$$
Thus, $\{n\theta\}\in [0,r_j]$ if and only if $\{(n-v)\theta\}\in [0,r_i]$, which shows that 
\begin{equation}
\label{eq:C1}
\sum_{n\ge 0}\frac{\delta_{[0,r_j]}(n)}{b^n}=\sum_{n=0}^{v-1} \frac{\delta_{[0,r_j]}(n)}{b^n}+\frac{1}{b^v}\sum_{m\ge 0} \frac{\delta_{[0,r_i]}(m)}{b^{m}}.
\end{equation}
In particular, up to translating $T(b,\theta,{\mathcal A},{\bf u})$ by
an algebraic number and replacing
$u_0,\ldots,u_{j-1},u_{j+1},\ldots,u_{\ell}$ by some linear
combination of themselves with $u_j$ with algebraic coefficients, we
may eliminate $r_j$ out of $A$. Thus, we assume that any two values
among $r_1,\ldots,r_{\ell}$ are incongruent modulo the lattice ${\mathbb Z}\theta+{\mathbb Z}$. 
Note that condition \eqref{eq:C} is  satisfied for example when $r_i\in {\mathbb Q}$ for $i=1,\ldots,\ell$. Indeed, in this case, since $\theta$ is irrational, it follows that if $r_j-r_i\in {\mathbb Z}\theta+{\mathbb Z}$, then 
$r_j-r_i\in {\mathbb Z}$ and since both $r_i$ and $r_j$ are in $(0,1)$, this is impossible. 
Let $b_1,b_2,\ldots,b_k$ be complex numbers. We label them such that
$|b_1|\le  |b_2|\le \cdots \le |b_{\ell}|$ and let $r\in
\{1,\ldots,k\}$ be such that $|b_1|=|b_r|<|b_{r+1}|$.  Then

\begin{theorem}
\label{thm:2}
Let $\theta$ be irrational, $\ell\ge 1$, ${A}$ be the set given by
\eqref{eq:A}, satisfying \eqref{eq:C} and
${\bf u}\in {\overline{\mathbb Q}}^{\ell+1}\backslash \{{\bf 0}\}$
satisfying $u_i\ne u_{i+1}$ for all $i=0,1\ldots,\ell-1$. Assume that
$b_1,\ldots,b_k$ are multiplicatively independent algebraic numbers of
modulus $>1$. Then
$$
1,T(b_1,\theta,{A},{\bf u}),\ldots,T(b_k,\theta,{A},{\bf u})
$$
are linearly independent over ${\overline{\mathbb Q}}$ in the following cases:
\begin{itemize}
\item[(i)] $r=1$;
\item[(ii)] $r=2$, $\theta$ is a ratio of two logarithms of algebraic numbers and $A$ consists of algebraic numbers;
\item[(iii)] $\theta$ has bounded partial quotients.
\end{itemize}
\end{theorem}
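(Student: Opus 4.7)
The plan is to apply Schmidt's Subspace Theorem to linear forms constructed from near-periodic truncations of each $T(b_i,\theta,A,{\bf u})$. The combinatorial input is that, because $\theta$ is irrational, the coding sequence $(a_n)$ (where $a_n=u_i$ when $\{n\theta\}\in[r_i,r_{i+1}]$) admits, for infinitely many $q$, a near-cube factorization of a prefix: the word $a_0\cdots a_{N-1}$ decomposes as $UVVV'$ with $|V|=q$ and $V'$ a prefix of $V$ of length at least $\alpha q$ for some fixed $\alpha>0$. Such factorizations arise from arguments in the style of the three-distance theorem applied to the partition $\{[r_i,r_{i+1}]\}$ and the convergents of $\theta$; the non-congruence hypothesis \eqref{eq:C} is needed so that the partition endpoints do not accidentally coincide modulo the rotation orbit.

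Each such factorization yields a rational approximation $\xi_i^{(q)}=P_i(b_i^{-1})/(1-b_i^{-q})$ to $\xi_i:=T(b_i,\theta,A,{\bf u})$, with $P_i$ of controlled degree and height over the number field $K$ generated by the $b_j$'s, the entries of $A$, and the entries of ${\bf u}$. The approximation error satisfies $|\xi_i-\xi_i^{(q)}|=O(|b_i|^{-(2+\alpha)q})$, while the natural common denominator of the $\xi_i^{(q)}$ divides $\prod_j (1-b_j^{-q})$, of Archimedean size bounded by a constant. This is the quality of simultaneous rational approximation needed to invoke the Subspace Theorem.

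Assuming for contradiction a nontrivial relation $c_0+\sum_{i=1}^k c_i\xi_i=0$ with $c_j\in\overline{\mathbb Q}$, I would build a family of linear forms in the variables $(1,b_1^{-q},\ldots,b_k^{-q})$ evaluated at the approximation vectors, supplemented by auxiliary forms at the Archimedean places detecting the smallness of the errors. Multiplicative independence of the $b_i$ ensures that these evaluation points do not all lie in a fixed proper subvariety, so the Subspace Theorem forces infinitely many of the forms to vanish on the evaluation vectors. Unwinding the vanishing produces either a forbidden multiplicative relation among the $b_i$, or an identity forcing $u_i=u_{i+1}$ for some $i$, both excluded by hypothesis. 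The case split in the statement is dictated by how the minimum-modulus bases contribute to the error analysis: in (i) only $b_1$ achieves the smallest modulus, so the dominant error term is isolated; in (ii) both $b_1$ and $b_2$ are of minimum modulus, and Baker's theorem on linear forms in logarithms is invoked, using the hypothesis $\theta=\log\alpha/\log\beta$ and algebraic $r_i$, to quantitatively separate $\theta$ from the relations induced by $b_1/b_2$; in (iii) bounded partial quotients of $\theta$ produce a denser and higher-exponent sequence of near-cubes, supplying approximation quality strong enough for arbitrary $r$.

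The main obstacle I anticipate is the non-degeneracy verification inside the Subspace Theorem, particularly in case (ii): one must rule out accidental cancellations between the $b_1$- and $b_2$-contributions in the leading error, and this is precisely where the Baker-theoretic separation of $\theta$ from the $\mathbb Z$-span of the logarithms of the $r_i$-data enters. A secondary technical issue is coordinating the sequence of $q$'s across all bases simultaneously, which I would handle by taking $q$ to range over a common subsequence of convergent denominators of $\theta$ and carefully tracking the heights of the polynomial approximants $P_i$ over the single number field $K$.
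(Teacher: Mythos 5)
Your high-level plan (truncate $T(b_i,\theta,A,\mathbf{u})$ to a near-periodic approximant, apply the Subspace Theorem to a system of linear forms in the quantities $b_i^{-q}$ over a single number field, and use multiplicative independence plus non-degeneracy to derive a contradiction from a putative linear relation) matches the paper's strategy in broad outline, and your sketch of how to handle case (ii) via Baker's theorem is pointed in the right direction. However, there are two substantial gaps, one of which flatly contradicts a key feature of the problem.

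First, your starting combinatorial premise---that the coding sequence admits a prefix factorization $UVVV'$ with $|V|=q$ and $|V'|\ge\alpha q$ for a \emph{fixed} $\alpha>0$---is both too weak and too strong for this theorem. It is too weak because the paper needs the ``repetition exponent'' $w$ to be \emph{arbitrarily large}: the common-denominator trick required when $b$ is an algebraic non-integer (clearing denominators by $B^{ws_n}$) degrades the approximation quality, and the Subspace Theorem only bites when $w$ exceeds a threshold that grows with $[\mathbb{K}:\mathbb{Q}]$ and the height of $b$ (see the inequalities requiring $w > 4D(C_1+1)\log B_1/\log|b| + 1$). A fixed exponent $2+\alpha$ cannot provide this. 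It is also too strong because prefixes of codings of rotations into $\ell+1\ge 2$ intervals are \emph{not} clean near-cubes along $s_n=q_n$: instead, condition (iii.1) of the paper says that the prefix agrees with the periodic approximant except at $O(1)$ arithmetic progressions of ``defects'' $i_1(n),\ldots,i_{t_n}(n)$ modulo $q_n$ (with $t_n\le 2\ell$). The entire machinery of the paper---Lemmas~\ref{lem:important1} and~\ref{lem:important2}, and conditions (iii.2)--(iii.3), (iv.1)--(iv.2), (v.1)--(v.4)---exists precisely to analyze these defects, which your factorization assumption wishes away.

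Second, and more seriously, your explanation of case (iii) is backwards. You claim bounded partial quotients of $\theta$ ``produce a denser and higher-exponent sequence of near-cubes.'' The opposite is true: the repetition exponent of a Sturmian-type word is governed by $\limsup a_n$, so bounded partial quotients give the \emph{weakest} repetitions (critical exponent close to $2$), while unbounded partial quotients are what give arbitrarily high powers. This is exactly why the paper splits into an ``easy case'' (iv.1) applicable when the partial quotients are unbounded, and a ``hard case'' (iv.2), (v.3), (v.4) designed specifically for bounded partial quotients, where one must exploit the quantitative spacing of the defect positions $\kappa_j(n)$ rather than any near-cube structure. Your proposed mechanism for (iii) would therefore fail; the needed argument is substantially different and is the most delicate part of the paper.

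A smaller issue: you do not address why, for $b$ an algebraic non-integer, the evaluation vectors can be made $\mathcal{S}$-integral with controlled behavior at the finite places dividing the denominators of $b$; this is a non-trivial bookkeeping step (the common denominator $B$, the $\mathcal{S}$-unit structure of the coordinates) without which the Subspace Theorem's double product cannot be estimated.
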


As an example application of Theorem~\ref{thm:2}, we 
derive the transcendence of the sum
$$
\sum_{\substack{n\\ \cos(n\theta)>0}} \frac{\cos(n\theta)}{b^n}
$$
in case $\theta$ is real such that $e^{i\theta}$ is algebraic but not
a root of unity.  This answers a question posed in~\cite[Section 4]{FijalkowOPP019} in
relation to a decision problem in control theory.
The application proceeds as follows.
Put $b_1:=be^{i\theta},~b_2:=be^{-i\theta}$ and note that the number above is 
\begin{eqnarray*}
\frac{1}{2}\left(\sum_{\substack{n>0\\ \cos(n\theta)>0}}
  \frac{e^{i\theta n}}{b^n}+\frac{e^{-i\theta n}}{b^n}\right) & = & \frac{1}{2}\sum_{\substack{n>0\\ \cos(n\theta)>0}} \left(\frac{1}{b_1^n}+\frac{1}{b_2^n}\right)\\
& = & \frac{1}{2}T(b_1,\theta_1,{A},{\bf u})+\frac{1}{2}T(b_2,\theta_1,{A},{\bf u}),
\end{eqnarray*}
where $\theta_1:=\theta/(2\pi)=\log(e^{i\theta})/\log(-1)$ is
irrational and a ratio of two logarithms of algebraic numbers,
$\ell=2$, $A=\{1/4,3/4\}\subset {\overline{\mathbb Q}}$,
${\bf u}=(1,0,1)$. Condition \eqref{eq:C} holds since $r_1,r_2$ are
rational. To see that $b_1,~b_2$ are multiplicatively independent,
assume on the contrary that $b_1^x=b_2^y$ for some integers $x,y$ not
both zero. Taking absolute values we get $|b|^x=|b|^y$, so
$x=y$. Thus, the relation $b_1^x=b_2^y$ now simplifies to
$e^{2i x\theta}=1$, a contradiction since $e^{i\theta}$ is not a root
of $1$.  Thus transcendence of
$\displaystyle\sum_{\substack{n>0\\ \cos(n\theta)>0}}\frac{\cos(n\theta)}{b^n}$
follows from Theorem~\ref{thm:2}.  In fact, our result gives more: for
example, we have that
$$
1,\quad \sum_{\substack{n>0\\ \cos(n\theta)>0}}\frac{\cos(n\theta)}{b^n},\quad \sum_{\substack{n>0\\ \sin(n\theta)>0}} \frac{\sin(n\theta)}{b^n}
$$
are linearly independent over ${\overline{\mathbb Q}}$.

We now describe a second consequence of Theorem~\ref{thm:2}.
Let ${\bf v}:=(v_1,\ldots,v_{\ell})\in {\overline{\mathbb Q}}^{\ell}\backslash \{{\bf 0\}}$ and put
\begin{equation}
\label{eq:Sturmian}
{\mathcal S}(b,\theta,A,{\bf v}):=\sum_{n\ge 0} \sum_{i=1}^{\ell}\frac{v_i}{b^{\lfloor n\theta+r_i\rfloor}}.
\end{equation}
Again we assume condition \eqref{eq:C} for if not, say if $r_j-r_i\in {\mathbb Z}\theta+{\mathbb Z}$, then an argument similar to the one from \eqref{eq:C1} shows that
$$
\sum_{n\ge 0} \frac{1}{b^{\lfloor n\theta+r_j\rfloor}}\in {\overline{\mathbb Q}}\sum_{n\ge 0} \frac{1}{b^{\lfloor n\theta+r_i\rfloor}}+{\overline{\mathbb Q}}.
$$
Hence, up to a translate of ${\mathcal S}(b,\theta,A,{\bf v})$ by a
number in ${\overline{\mathbb Q}}$ and up to replacing $v_i$ by a
linear combination of $v_i$ and $v_j$, we can eliminate $r_j$ from
$A$.  Then we have:

\begin{theorem}
\label{thm:22}
Let $\theta$ be irrational, ${A}$ be the set given by \eqref{eq:A} satisfying \eqref{eq:C}, and ${\bf v}\in {\overline{\mathbb Q}}^{\ell}\backslash \{{\bf 0}\}$. Assume that $b_1,\ldots,b_k$ are multiplicatively independent algebraic numbers of modulus $>1$. Then 
$$
1,S(b_1,\theta,{A},{\bf v}),\ldots,S(b_k,\theta,{A},{\bf v})
$$ linearly independent over ${\overline{\mathbb Q}}$ in the following cases:
\begin{itemize}
\item[(i)] $r=1$;
\item[(ii)] $r=2$, $\theta$ is a ratio of two logarithms of algebraic numbers and $A$ consists of algebraic numbers;
\item[(iii)] $\theta$ has bounded partial quotients. 
\end{itemize}
\end{theorem}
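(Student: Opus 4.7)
The plan is to reduce Theorem~\ref{thm:22} to Theorem~\ref{thm:2} by reindexing the series so that the Sturmian-type structure migrates from the exponents into the coefficients. Assuming without loss of generality that $\theta>0$, I fix $i\in\{1,\ldots,\ell\}$ and refiber
\[
S_i := \sum_{n\ge 0}\frac{1}{b^{\lfloor n\theta+r_i\rfloor}} \;=\; \sum_{m\ge 0}\frac{N_m^{(i)}}{b^m},
\]
where $N_m^{(i)} := \#\{n\ge 0 : \lfloor n\theta+r_i\rfloor=m\}$ counts the number of integers in an interval of length $1/\theta$. Writing $\theta':=1/\theta$ and $q:=\lfloor\theta'\rfloor$, a direct computation shows $N_m^{(i)}\in\{q,q+1\}$ for all but finitely many $m$, with $N_m^{(i)}=q+1$ precisely when $\{m\theta'\}$ lies in the arc $(\{r_i\theta'\}-\{\theta'\},\,\{r_i\theta'\})\bmod 1$.

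Substituting into $\mathcal{S}(b,\theta,A,{\bf v})=\sum_i v_i S_i$ and interchanging summation, the coefficient of $b^{-m}$ becomes a step function of $\{m\theta'\}$ taking algebraic values, constant on each region cut out by the $2\ell$ points $\{r_i\theta'\}$ and $\{r_i\theta'-\{\theta'\}\}$ in $[0,1)$. Absorbing the finitely many exceptional initial terms into an algebraic constant and eliminating redundant endpoints in the spirit of the paragraph preceding \eqref{eq:C}, this identifies
\[
\mathcal{S}(b,\theta,A,{\bf v}) = c + T(b,\theta',A'',{\bf u}''),\qquad c\in\overline{\mathbb Q},
\]
for a suitable finite set $A''\subset(0,1)$ and nonzero vector ${\bf u}''\in\overline{\mathbb Q}^{|A''|+1}$. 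I next verify that $(\theta',A'',{\bf u}'')$ satisfies the hypotheses of Theorem~\ref{thm:2}: $\theta'$ is irrational; condition \eqref{eq:C} for $A''$ against $\theta'$ follows from the same condition for $A$ against $\theta$ by a short lattice computation; case (i) is unchanged under $\theta\mapsto\theta'$; case (iii) is preserved since $\theta$ and $\theta'=1/\theta$ have essentially the same continued-fraction expansion; and if $\theta=\log\alpha/\log\beta$ then $\theta'=\log\beta/\log\alpha$. Theorem~\ref{thm:2} then yields the desired linear independence of $1,\mathcal{S}(b_1,\theta,A,{\bf v}),\ldots,\mathcal{S}(b_k,\theta,A,{\bf v})$ over $\overline{\mathbb Q}$.

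The main obstacle is case (ii): although Theorem~\ref{thm:2}(ii) requires $A$ to consist of algebraic numbers, the $A''$ produced by our reduction contains $\{r_i\theta'\}$, which is transcendental since $\theta'$ is. The elements of $A''$ are nevertheless of the form $\{\gamma/\log\alpha\}$, where $\gamma$ is a $\overline{\mathbb Q}$-linear combination of $\log\alpha$ and $\log\beta$, and Baker's theorem on linear forms in logarithms of algebraic numbers supplies the quantitative lower bounds that underpin the proof of Theorem~\ref{thm:2}(ii). Extending that proof to allow $A$ consisting of such ratios---and then applying it to our $A''$---closes case (ii); this mild generalisation is expected to be routine but requires careful bookkeeping of the effective estimates it produces.
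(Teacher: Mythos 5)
Your reduction of ${\mathcal S}$ to $T$ --- reindexing on $m$, computing $c_r(m)=\lfloor 1/\theta\rfloor+\delta_r(m)$ where $\delta_r(m)$ is the indicator of $\{m/\theta\}$ landing in a short arc determined by $r$ and $1/\theta$, and reassembling the resulting step function --- is precisely the paper's own argument, which arrives at the same $A'=\{\,1-(1-r_i)/\theta \bmod 1\,\}$ and the same relation $u_i-u_{i+1}=v_{i+1}$ determining ${\bf u}$. Your flag on case (ii) is well taken and is in fact a gap in the paper's write-up: the paper simply states ``Thus, Theorem~\ref{thm:22} follows from Theorem~\ref{thm:2}'' without observing that the transformed set $A'$ no longer consists of algebraic numbers when $\theta$ is an irrational ratio of logarithms, and the algebraicity hypothesis in Theorem~\ref{thm:2}(ii) is used exactly once, to apply Baker's theorem to the linear form on the left of \eqref{eq:kappa1kappa2}.

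That said, you should not leave this as a loose ``expected to be routine'' end: closing it needs no new idea and no delicate bookkeeping. Since $r_i'-r_{i'}'=(r_i-r_{i'})/\theta \pmod 1$, the form one must bound from below when running the (v.2) verification for the pair $(1/\theta,A')$ is
$$
(\kappa_2-\kappa_1)\tfrac{1}{\theta}-T-\bigl(r_i'-r_{i'}'\bigr)
=\bigl[(\kappa_2-\kappa_1)-(r_i-r_{i'})\bigr]\tfrac{1}{\theta}-T''\qquad\text{for some }T''\in\mathbb Z,
$$
and multiplying through by $\log\alpha$ (with $\theta=\log\alpha/\log\beta$, so $1/\theta=\log\beta/\log\alpha$) gives $\bigl[(\kappa_2-\kappa_1)-(r_i-r_{i'})\bigr]\log\beta-T''\log\alpha$, a linear form in $\log\alpha,\log\beta$ with algebraic coefficients of the same heights as in the original verification. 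Baker's theorem therefore applies verbatim and (v.2) holds for $(1/\theta,A')$ exactly as it does for $(\theta,A)$. Spelling this out would make both your proof and the paper's reduction of Theorem~\ref{thm:22}(ii) complete.
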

The particular case $k=\ell=1$ and $b:=b_1\in {\mathbb Z}$ has been
proved by Adamczewski and Bugeaud in~\cite{AB2}, while the case when $k=1$, $\ell=2$ and $b=b_1\in {\mathbb Z}$ appears in \cite{BH}.

\section{The Subspace Theorem}

Our main tool is the Subspace Theorem which we now recall. For a prime $p$ and $x\in {\mathbb Q}$ we put
$$
|x|_p=p^{-{\text{\rm ord}}_p(x)},
$$
for $x\ne 0$, where ${\text{\rm ord}}_p(x)$ is the exponent of $p$ in
the factorisation of $x$, and $|0|_p=0$. We also put
$|x|_{\infty}:=|x|$ and ${\mathcal M}:=\{\infty\}\cup\{p: p~{\text{\rm
    prime}}\}$.  For all $x\in {\mathbb Q}^*$ we have 
the \emph{product formula} 
$$
\prod_{v\in {\mathcal M}} |x|_v=1 \, .
$$
We extend the $p$-adic valuation to algebraic numbers by putting for $x\in {\overline{\mathbb Q}}$, 
$$
|x|_p=|N_{{\mathbb K}/{\mathbb Q}}(x)|_p^{1/[{\mathbb K}:{\mathbb Q}]}\quad {\text{\rm for}}\quad {\mathbb Q}(x)\subset {\mathbb K}~{\text{\rm and}}\quad [{\mathbb K}:{\mathbb Q}]<\infty.
$$
As is well known, the above formula depends only on $x$ and $p$ and not on the number field ${\mathbb K}$ containing $x$. We will work with linear forms $L({\bf x})\in {\overline{\mathbb Q}}[{\bf x}]$,
where ${\bf x}:=(x_1,\ldots,x_m)$. When specialising $(x_1,\ldots,x_m)$ to a vector in ${\mathbb K}^m$ for some number field ${\mathbb K}$, we will need to work with the infinite valuations of ${\mathbb K}$
extended to ${\overline{\mathbb Q}}$. Assume that the field ${\mathbb K}$ has $r+s$ infinite valuations, where 
$r$ is the number of real ones and $2s$ is the number of complex ones.  Labelling them $\sigma_1,\ldots,\sigma_K$, where $K:=r+s$, they are defined, for $x\in {\mathbb K}$, by
$$
|x|_{\infty_k}:=|x^{(\sigma_k)}|^{\delta_k/[{\mathbb K}:{\mathbb Q}]}\quad {\text{\rm for~all}}\quad k=1,\ldots,K,
$$
where $\delta_k=1$ if ${\mathbb K}^{(\sigma_k)}$ is real and $\delta_k=2$ if ${\mathbb K}^{(\sigma_k)}$ is complex non-real for $k=1,\ldots,K$. We extend these valuations to ${\overline{\mathbb Q}}$ in the same way as we extended the $p$-adic valuations from ${\mathbb Q}$ to ${\overline{\mathbb Q}}$. Namely, if $x\in {\overline{\mathbb Q}}$, we put
$$
|x|_{\infty_k}:=|N_{{\mathbb L}/{\mathbb K}}(x)|^{1/[{\mathbb L}:{\mathbb K}]}_{\infty_k},\quad {\text{\rm where}}\quad {\mathbb K}(x)\subset {\mathbb L}\quad {\text{\rm and}}\quad [{\mathbb L}:{\mathbb Q}]<\infty
$$
for $k=1,\ldots,K$. As in the case of the $p$-adic valuations, the above number depends only on ${\mathbb K}$ and $x$ and does not depend on the number field ${\mathbb L}$ containing ${\mathbb K}(x)$. 
We put ${\mathcal M}_{\mathbb K}$ for the set of all the valuations of ${\mathbb K}$ extended to ${\overline{\mathbb Q}}$, namely
${\mathcal M}_{\mathbb K}:=\{\infty_1,\ldots,\infty_K\}\cup \{p: p~{\text{\rm primes}}\}$. Below is the form of the Subspace Theorem that we use.

\begin{theorem}
\label{thm:ST}
Let ${\mathbb K}$ be a number field and ${\mathcal S}$ be a finite
subset of ${\mathcal M}_{\mathbb K}$ containing all the infinite
valuations on $\mathbb K$. Let $m\ge 2$. For each $v\in {\mathcal S}$, let 
$$
L_{1,v}({\bf x}),\ldots,L_{m,v}({\bf x})
$$
where ${\bf x}:=(x_1,\ldots,x_m)$ be linearly independent linear forms in ${\bf x}$ with coefficients in ${\overline{\mathbb Q}}$. Given $\delta>0$, the set of solutions to 
\begin{equation}
\label{eq:DP}
\prod_{v\in {\mathcal S}}\prod_{i=1}^m |L_{i,v}({\bf x})|_v<\| {\bf x}\|^{-\delta},\qquad {\bf x}\in {\mathcal O}_{\mathbb K}^m
\end{equation}
belongs to finitely many proper subspaces of ${\mathbb K}^m$. Here, 
$$
\|{\bf x}\|:=\max\{|x_i|_v, v\in {\mathcal M}_{{\mathbb K}},~1\le i\le m\}.
$$
\end{theorem}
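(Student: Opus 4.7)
The Subspace Theorem as stated is the Schmidt--Schlickewei generalisation of Roth's theorem, and any genuinely self-contained proof is a major undertaking; I would follow the strategy of Schmidt's original 1972 argument as later streamlined by Evertse and Schlickewei. The plan is to argue by contradiction. Suppose the solutions $\mathbf{x}\in\mathcal{O}_{\mathbb{K}}^m$ to \eqref{eq:DP} are Zariski-dense in $\mathbb{K}^m$. After passing to the Galois closure of $\mathbb{K}$ and replacing each form $L_{i,v}$ by all its Galois conjugates (enlarging $\mathcal{S}$ accordingly), one may reduce to the case $\mathbb{K}=\mathbb{Q}$ at the cost of slightly weakening $\delta$.

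The first technical step is to apply the geometry of numbers, specifically Minkowski's second theorem on successive minima, separately at each place $v\in\mathcal{S}$. Given an approximating solution $\mathbf{x}$, the forms $L_{1,v},\ldots,L_{m,v}$ cut out a parallelepiped whose successive minima relative to the lattice $\mathcal{O}_{\mathbb{K}}^m$ are tightly constrained by \eqref{eq:DP}. This lets one replace the original forms at $v$ by an adapted basis whose behaviour on the solution vector is essentially diagonal. By extracting an infinite subsequence and a pigeonhole argument, one may further assume that the ``combinatorial type'' of the approximation (which successive minimum is small at which place, and in which direction) is constant along the subsequence.

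The heart of the argument is then an auxiliary-polynomial construction \emph{à la} Thue--Siegel--Roth, transplanted into the multi-projective setting. One selects $N$ widely spaced solutions $\mathbf{x}^{(1)},\ldots,\mathbf{x}^{(N)}$ from the sequence and uses Siegel's lemma to construct a multi-homogeneous polynomial $P$ in $N$ blocks of $m$ variables, with prescribed multi-degree $(r_1,\ldots,r_N)$ and small height, whose \emph{index} at $(\mathbf{x}^{(1)},\ldots,\mathbf{x}^{(N)})$ relative to the adapted forms is forced to be abnormally large by the hypothetical approximation quality. A generalised Roth lemma, for which Faltings' product theorem provides the cleanest modern formulation, then bounds the same index from above. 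Combining the two estimates through the product formula on $\mathbb{K}$ yields a contradiction, and hence all but finitely many of the ``well-approximating'' solutions must lie in some proper subspace; a noetherian argument packages these into finitely many such subspaces.

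The main obstacle, and the reason this is qualitatively harder than Roth, is the simultaneous handling of several valuations: the local successive-minima analysis at each $v\in\mathcal{S}$, archimedean and $p$-adic alike, must be compatible with a \emph{single} global choice of $P$ together with the parameters $N$ and $r_1,\ldots,r_N$. The bookkeeping takes place in the exterior powers $\bigwedge^k \mathbb{K}^m$, since the directions in which the approximations concentrate at each place correspond to one-dimensional subspaces of these exterior powers; achieving the right uniform balance between the local data and the global height bound, so that the product formula actually bites, is where essentially all of the real difficulty lies.
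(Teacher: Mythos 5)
The paper does not prove Theorem~\ref{thm:ST}: it is quoted, without proof or even an explicit citation, as ``the form of the Subspace Theorem that we use,'' i.e.\ as a known black box (the Schmidt--Schlickewei Subspace Theorem over number fields, in the version with arbitrary algebraic coefficients and the ``large points'' refinement due to Evertse and Schlickewei for the subspace count used later). So there is no proof in the paper for you to be compared against, and no referee would expect one here.

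Your text is a reasonable high-level survey of the standard proof strategy (geometry of numbers and Minkowski's second theorem place by place, passage to exterior powers, Siegel's lemma to build a multihomogeneous auxiliary polynomial, Roth's lemma or Faltings' product theorem to bound the index from above, contradiction via the product formula). It identifies the genuinely hard point --- making the local successive-minima data at all $v\in\mathcal S$ cohere with a single global auxiliary construction. But be clear-eyed that this is an outline, not a proof: every quantitative step (the choice and calibration of the multidegrees $r_1,\ldots,r_N$, the gap condition between consecutive solutions, the precise index lower bound from the approximation hypothesis, the passage between $\mathbb K$ and $\mathbb Q$ by restriction of scalars, and the final pigeonhole over ``combinatorial types'') is where the theorem actually lives, and none of it is supplied. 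Two small inaccuracies: the negation of the conclusion is ``infinitely many solutions outside any prescribed finite union of proper subspaces,'' which is weaker than Zariski-density of the solution set; and the reduction to $\mathbb Q$ is by Weil restriction (blowing up $m$ to $m[\mathbb K:\mathbb Q]$ variables), not merely by adjoining Galois conjugate forms, though the two ideas are related. In the context of this paper the correct move is simply to cite the theorem (e.g.\ Schlickewei's paper, or Bombieri--Gubler, or Evertse--Schlickewei) rather than to sketch its proof.
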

 Note that since the vector ${\bf x}$ of solutions to inequality \eqref{eq:DP} has algebraic integer components, it follows that $\|{\bf x}\|$ is realised by one of the infinite valuations $|x_i|_v$, $v\in \{\infty_1,\ldots,\infty_K\}$
 of the coordinate $x_i$ for $i\in \{1,\ldots,m\}$ of ${\bf x}$. 
\section{A Transcendence Criterion}

The transcendence of automatic numbers of certain forms has been studied in many papers. See \cite{AB1}, \cite{AB2}, \cite{ABL} for example. Here is the setup.
Let $b\ge 2$ be an integer, $\{a_n\}_{n\ge 0}$ be a sequence with values in a finite set of nonnegative integers say ${\mathcal B}=\{0,1,\ldots,b-1\}$ which is not eventually periodic. 
Consider the infinite word 
$$
{\bf a}:=a_0a_1\ldots a_k\ldots
$$
Assume that there exist two sequences $\{r_n\}_{n\ge 1},~\{s_n\}_{n\ge 1}$ and a number $w>1$ such that:
\begin{itemize}
\item[(i)] $r_n/s_n=O(1)$;
\item[(ii)] The sequence $\{s_n\}_{n\ge 1}$ tends to infinity; 
\item[(iii)] ${\bf a}=U_n{\underbrace{V_n\ldots V_n}_{w~{\text{\rm times}}}}\ldots$, where $U_n,~V_n$ have lengths $r_n$ and $s_n$, respectively (here, we mean that the first $r_n+\lfloor w s_n\rfloor$ letters of ${\bf a}$ and $U_nV_nV_n\ldots$ coincide). 
\end{itemize}
Then the number
$$
S_b({\bf a}):=\sum_{n\ge 1} \frac{a_n}{b^n}
$$
is transcendental. This is the main result in \cite{ABL} (see also \cite{AB1} and \cite{AB2}). A few comments are in order. For example, how important is it that the set of values of ${\bf a}$ 
is $[0,b-1]\cap {\mathbb Z}$? Can it be any finite set of algebraic numbers?
Can one replace the condition $b$ being an integer by the weaker condition 
that $b$ is algebraic with $|b|>1$? In this paper, we address these questions. 

In the rest of this section we reproduce the proof from \cite{ABL}. In the next sections we suitably modify it and pay attention to the eventual obstructions for the method to go through. In the last section we show 
that our sequences fulfil all the criteria that we introduce along the way and we get
the announced results. Put $\alpha:=S_b({\bf a})$. The proof uses (iii) and introduces
\begin{equation}
\label{eq:1111}
\alpha^{(n)}:=\sum_{k\ge 1} \frac{a_k^{(n)}}{b^n},
\end{equation}
where ${\bf a}^{(n)}:=a_1^{(n)}a_2^{(n)}\ldots a_k^{(n)}\ldots$ is the approximant $U_nV_nV_nV_n\cdots$ of ${\bf a}$. More precisely the numbers $a_k^{(n)}$ appearing in $\alpha^{(n)}$ are given by 
$$a_k^{(n)}:=\left\{\begin{matrix} a_k & {\text{\rm for}} & k\le r_n+ws_n;\\ 
a_{k+s_n}^{(n)} & {\text{\rm for}} & k\ge r_n.\end{matrix}\right. 
$$
Certainly,  since $\alpha^{(n)}$ has the compact formula 
$$
\alpha^{(n)}=\frac{p_n}{b^{r_n}(b^{s_n}-1)}
$$
for some $p_n\in {\mathbb Z}$, we see that (i) leads to 
$$
|\alpha-\alpha^{(n)}|<\frac{1}{b^{r_n+ws_n}}.
$$
This in turn leads to 
\begin{equation}
\label{eq:smallform}
|\alpha b^{r_n+s_n}-\alpha b^{r_n}-p_n|<\frac{1}{b^{(w-1)s_n}}.
\end{equation}
Assuming $\alpha$ is algebraic, the above is a linear form  in three variables 
$$
L(x_1,x_2,x_3):=\alpha x_1-\alpha x_2-x_3,
$$
with algebraic coefficients which is ``small" in the Archimedean valuation $\infty$ infinitely often at points $(x_1,x_2,x_3):=(b^{r_n+s_n},b^{r_n},p_n)$ of which two are powers of $b$, in particular composed only of primes dividing $b$. Condition (i) controls the height of the above integer vector $(x_1,x_2,x_3)$.  That is, it says that $\|{\bf x}\|\ll |x_i|^{\eta}$ holds for  all $i=1,2,3$ with a suitable $\eta>0$. The number $\eta$ can be taken to be $C_1/(C_1+1)$, where $C_1>r_n/s_n$ holds for all $n\ge 1$.  Condition (ii) ensures that there are infinitely many solutions to the above inequality \eqref{eq:smallform}. 
An immediate application of the Subspace Theorem (with ${\mathbb K}={\mathbb Q}$ and ${\mathcal S}=\{\infty\}\cup \{p: p\mid b\}$) gives that infinitely many of those points must satisfy a linear equation. We give these details in subsequent sections. But we already have a natural candidate for the linear equation 
namely $L(b^{r_n+s_n},b^{r_n},p_n)=0$. One shows that in fact, only this linear form can vanish infinitely often (other potential candidates of fixed linear forms vanishing on $(b^{r_n+s_n},b^{r_n},p_n)$ 
give only finitely many possibilities for $n$), but this leads to $\alpha$ being rational. Since $a_n$ has values in $\{0,1,\ldots,b-1\}$, the series $S_b({\bf a})$ is in fact the base $b$ expansion of $\alpha$ and one now invokes the elementary criterion that $\alpha$ is rational only if $\{a_n\}_{n\ge 0}$ is eventually periodic which is not the case. This gives the desired contradiction. 

\section{A New Transcendence Criterion}
\label{sec:variations}

This bird's eye view of the proof of the main result in \cite{ABL} shows that if one wants to make progress one needs to get better at two things:
\begin{itemize}
\item[(1)] get better (``smaller") expressions like \eqref{eq:smallform}. 
\item[(2)] replace the requirement that  $\{a_n\}_{n\ge 0}$ take
  values in $\{0,1,\ldots,b-1\}$ by a combinatorial condition on ${\bf
    a}$ that allows the
$a_n$ to have values in any finite set of algebraic numbers. 
\end{itemize}
We start by considering transcendence of a single number $S_b({\bf a})$.
We assume that $b$ is algebraic with $|b|>1$. As
suggested by (2) above, we shall assume that the set of values of
${\bf a}$ is a finite set of algebraic numbers denoted ${\mathcal
  H}$. We put ${\mathbb K}:={\mathbb Q}(b)$ and assume it has degree
$D$. Up to multiplying through by a common denominator of the numbers
in ${\mathcal H}$, we assume that they are all algebraic integers and
we let $H$ be an upper bound for the house (largest absolute values of
the conjugates) of any of these numbers.  We keep the sequences
$\{r_n\}_{n\ge 1}$ and $\{s_n\}_{n\ge 1}$ satisfying (i) and (ii) and
we assume additionally that they are strictly increasing.  We carry
over also the definition of the ultimately periodic approximant
${\bf a}^{(n)}$ of $\bf a$.  As before, we assume that $C_1>r_n/s_n$ for all
$n\ge 1$.  Along the way, we will find other constants
$C_2,C_3,\ldots$. They all depend on our data
${\bf a},b,b_1,\ldots,b_k$, but not on
$w$. If we want to write something depending on $w$, we will emphasise
the dependence by writing $C(w)$, or $O_w(1)$.

We replace (iii) by the following requirement:
\begin{itemize}
\item[(iii.1)] ${\text{\rm BPP}}$:
  For each integer $w>1$ there exists $n_w$ such that for all $n \geq
  n_w$ there exists $t_n$ such that,  putting $m:=r_n+ws_n$, we have
  $$\left\{ j \in [0,m) : a_j \neq a_j^{(n)} \right\} =
  \left( \{i_1(n),\ldots,i_{t_n}(n)\} + \mathbb{Z}_{\geq 0}s_n\right)
  \cap [0,m) \, . $$ We assume that the $i_{\ell}(n)$ are distinct
  modulo $s_n$ and write $I_w(n)$ for the union of all arithmetic
  progressions on the right-hand side above.  We further require that
  for $j=1,2,\ldots,w-1$,
$$
\#(\{i_1(n),\ldots,i_{t_n}(n)\}\cap [r_n+js_n,r_n+(j+1)s_n)])=O(1).
$$
\item[(iii.2)] ${\text{\rm EGP}}$: If $t_n\ge 2$, then $i_{\ell}(n)-i_{\ell-1}(n)$ tends to infinity with $n$ for $\ell=2,3,\ldots,t_n$.
\item[(iii.3)] ${\text{\rm LPP}}$: There exists a function $f_0:{\mathbb N}\mapsto {\mathbb N}$ such that $f_0(m)$ tends to infinity with $m$ and for $\ell=1,\ldots,t_n$ and $m\in {\mathbb Z}_{\ge 0}$, we have
$$
a_{i_{\ell}(n)+m s_n}=a_{i_{\ell}(n)+(m+1)s_n},\quad {\text{\rm for~all}}\quad 0\le m\le f_0(s_{n+1}).
$$ 
\end{itemize}

We use BPP for ``Bounded Progression Property", EGP for ``Expanding Gaps Property" and LPP for ``Long Pattern Property". Note that (iii.1) implies that $t_n=O(w)$.

\subsection{Using condition (iii.1)}

Let us see what is the advantage of the above condition (iii.1). We follow the method from \cite{ABL}. Let any $w>1$ be arbitrarily large but fixed. We will see how large we need it later. Let $n>n_w$ 
and let $\alpha^{(n)}$ be given by \eqref{eq:1111} with the same
definition of $a_k^{(n)}$. Condition (iii.1) implies
$$
\left|\alpha-\alpha^{(n)}-\sum_{\ell\in I_w} \frac{c_{\ell}}{b^{i_{\ell}(n)}(b^{s_n}-1)}\right|<\frac{2H}{|b|^{r_n+ws_n}},
$$
where $c_\ell:=a_{i_{\ell}(n)}-a_{i_{\ell}(n)-s_n}$ for $\ell=1,2,\ldots,t_n$. Note that since $a_n\in {\mathcal H}$ it follows that the $c_\ell$'s have values in the finite set ${\mathcal H}-{\mathcal H}$ of algebraic integers. In particular, the house of $c_{\ell}$ is at most $2H$ for $\ell=1,\ldots,t_n$. Since $t_n=O(w)$ and $w$ is fixed, we may assume that $t_n=t$ is fixed. Since $t$ is fixed and the $c_{\ell}$'s take values in a finite set, we may assume that the $c_{\ell}$'s are fixed for $\ell=1,\ldots,t$. Clearly, there are finitely many choices for $(t,c_1,\ldots,c_t)$. Replacing $\alpha^{(n)}$ by its formula we get
\begin{equation}
\label{eq:ttuu}
\left|\alpha-\frac{p_n}{b^{r_n}(b^{s_n}-1)}-\sum_{\ell=1}^t \frac{c_{\ell}}{b^{i_{\ell(n)}}(b^{s_n}-1)}\right|<\frac{2H}{|b|^{r_n+ws_n}}.
\end{equation}
Multiplying across by $b^{r_n}(b^{s_n}-1)$, we get
\begin{equation}
\label{eq:main1}
 \left|\alpha b^{r_n+s_n}-\alpha b^{r_n}-p_n -\sum_{\ell=1}^t c_{\ell}b^{r_n-i_\ell(n)}\right|
 <  \frac{2H}{|b|^{(w-1)s_n}}.
\end{equation}
The left--hand side in \eqref{eq:main1} above is a linear form in $t+3$ indeterminates
\begin{equation}
\label{eq:L}
L(u_1,u_2,y,z_1,z_2,\ldots,z_t):=\alpha u_1-\alpha u_2-y-\sum_{\ell=1}^t c_{i_\ell}z_\ell.
\end{equation}
Assuming $\alpha\in {\overline{\mathbb Q}}$, the above form has coefficients which are algebraic numbers in the extension ${\mathbb L}:={\mathbb K}(\alpha,{\mathcal H})$. Here is our intermediate result. 
\begin{lemma}
\label{lem:important1}
Assume that conditions (i), (ii) and (iii.1) hold, that $b$ is
algebraic, $|b|>1$ and that $\alpha:=S_b({\bf a})$ is also algebraic.
Then there exists a constant $C_2:=C_2(b,{\bf a})$ such that for
$w>C_2$ and all $n>n_w$ the following holds: There exist $C_3(w)$, a
finite set $E$ of numbers in ${\mathbb L}$ containing $0$ (which
depends on $w$), $\ell\in \{1,2,\ldots,t_n\}$ with $i_{\ell}(n)>ws_n-C_3(w)$ in $I_w(n)$, such that
the left-hand side of \eqref{eq:ttuu} equals
\begin{equation}
\label{eq:indep1}
\alpha-\frac{p_n}{b^{r_n}(b^{s_n}-1)}-\sum_{u=1}^{t_n} \frac{c_u}{b^{i_{u}(n)}(b^{s_n}-1)}=\frac{e}{b^{i_{\ell}(n)}(b^{s_n}-1)}\quad {\text{for~some}}\quad e\in E.
\end{equation}
If $I_{w}(n)=\emptyset$, then we understand that in the above equation the number $e$ is zero. 
\end{lemma}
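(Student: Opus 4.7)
The plan is to apply the Subspace Theorem (Theorem~\ref{thm:ST}) to the small linear form identified in \eqref{eq:main1}. I would work in the number field $\mathbb{L} := \mathbb{K}(\alpha, \mathcal{H})$. For each $n > n_w$, form the vector
$$
\mathbf{x}_n := \bigl(b^{r_n+s_n},\, b^{r_n},\, p_n,\, b^{M_n - i_1(n)},\, \ldots,\, b^{M_n - i_{t_n}(n)}\bigr) \in \mathcal{O}_{\mathbb{L}}^{t_n+3},
$$
where $M_n := r_n + w s_n$ is large enough that every coordinate is an algebraic integer. Multiplying \eqref{eq:main1} by $b^{M_n - r_n}$, the resulting linear form in $\mathbf{x}_n$ is small at one distinguished archimedean valuation, with magnitude at most $2H\,|b|^{M_n - r_n - (w-1)s_n}$.

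Next I would take $\mathcal{S}$ to be the union of all infinite valuations of $\mathbb{L}$ together with the finitely many finite valuations $v$ at which $|b|_v \ne 1$. At each pair $(i,v)$ I would set $L_{i,v}$ to be a coordinate form except for the rescaled $L$ from \eqref{eq:L} at one archimedean place. The product formula applied to powers of $b$ causes the double product in \eqref{eq:DP} to telescope to a quantity of order $|b|^{-(w-1)s_n}\cdot \|\mathbf{x}_n\|^{\eta}$ with $\eta$ bounded once $w$ is fixed. Condition (i) bounds $\|\mathbf{x}_n\| \le C\,|b|^{(C_1+1)s_n}$ and (iii.1) gives $t_n = O(w)$, so a threshold $C_2 = C_2(b, \mathbf{a})$ can be chosen so that for every $w > C_2$ the product drops strictly below $\|\mathbf{x}_n\|^{-\delta}$ for some $\delta > 0$. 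Theorem~\ref{thm:ST} then confines the $\mathbf{x}_n$ to a finite union of proper subspaces of $\mathbb{L}^{t_n+3}$.

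Passing to an infinite subsequence on which $\mathbf{x}_n$ lies in a single subspace $V$, I extract a nontrivial $\mathbb{L}$-linear dependence among the coordinates of $\mathbf{x}_n$. If $V$ is the hyperplane cut out by $L$ itself, then the left-hand side of \eqref{eq:ttuu} is identically zero and $e = 0 \in E$. Otherwise, solving the relation for a single surviving monomial and substituting back into \eqref{eq:ttuu}, then clearing the denominator $b^{r_n}(b^{s_n}-1)$ and collecting terms, yields an expression of the shape $e/(b^{i_\ell(n)}(b^{s_n}-1))$ for some $\ell \in \{1,\ldots,t_n\}$ and $e \in \mathbb{L}$. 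Since there are only finitely many subspaces, each parametrised by finitely many $\mathbb{L}$-coefficients, the resulting numerators $e$ range over a finite set $E$ containing $0$.

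The principal obstacle is the lower bound $i_\ell(n) > w s_n - C_3(w)$. Here I would invoke the Expanding Gaps Property (iii.2): the exponents $M_n - i_\ell(n)$ become arbitrarily well separated as $n\to\infty$, so a fixed $\mathbb{L}$-linear relation in $b^{r_n+s_n}$, $b^{r_n}$, $p_n$, and the $b^{M_n - i_\ell(n)}$ can hold for infinitely many $n$ only if the dominant surviving exponent lies within a bounded distance of the extreme value $M_n - r_n = w s_n$; widely separated powers of the fixed algebraic $b$ could not balance otherwise. The Long Pattern Property (iii.3) is used in auxiliary steps to rule out coincidental cancellations among the $c_\ell$ that could push the surviving index far from the maximum. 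The constant $C_3(w)$ then records the bounded slack arising from the finitely many subspaces and the $w$-dependent coefficient data.
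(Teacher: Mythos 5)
The overall blueprint---set up the small linear form from \eqref{eq:main1}, choose $\mathcal S$ to contain the archimedean places and the places where $|b|_v\neq 1$, take the system of linear forms to be coordinate forms with $L$ replacing one of them at a distinguished archimedean place, and apply Theorem~\ref{thm:ST}---is the right one and matches the paper. But there are three substantive gaps.

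First, your vector $\mathbf{x}_n$ is not in $\mathcal{O}_{\mathbb K}^{t_n+3}$ when $b$ is not an algebraic integer: $b^{r_n+s_n}$ is a \emph{positive} power of $b$, and no choice of $M_n$ cures this, nor does multiplying the whole inequality by a power of $b$. Theorem~\ref{thm:ST} as stated requires algebraic-integer coordinates, so this is not a cosmetic issue. The paper introduces a positive integer $B$ that is a common denominator for both $b$ and $1/b$ and multiplies \eqref{eq:main1} by $B^{ws_n}$; this simultaneously clears denominators in $b^{r_n+s_n}$, $b^{r_n}$, and $b^{r_n-i_\ell(n)}$, and also makes $p_n B^{ws_n}$ divisible by a large power of $B$, which is then exploited when computing the finite part of the double product. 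Your scaling by $b^{M_n-r_n}$ is also internally inconsistent with your stated vector (after multiplying, the third coordinate would be $p_n b^{ws_n}$, not $p_n$), and your asserted archimedean bound $2H|b|^{M_n-r_n-(w-1)s_n}=2H|b|^{s_n}$ is large, not small.

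Second, a single application of the Subspace Theorem only confines $\mathbf{x}_n$ to finitely many hyperplanes; extracting one linear relation $L_1(\mathbf{x}_n)=0$ and ``solving for a single surviving monomial'' does not yield an identity of the shape $e/(b^{i_\ell(n)}(b^{s_n}-1))$---after eliminating one variable you still have a small nonzero linear form in $t_n+2$ variables. The paper's argument is genuinely iterative: one eliminates a variable using $L_1$, observes that the resulting form is still small and nonzero, applies the Subspace Theorem again to the smaller system, and repeats until a form in a \emph{single} variable survives. Only at that final stage do the three cases (the surviving variable is $p_n$, is one of $b^{r_n+s_n},b^{r_n}$, or is one of the $z_\ell$'s) get analysed, and it is the third case that produces $L(\mathbf{x})=e\,b^{r_n-i_\ell(n)}$ and hence \eqref{eq:indep1}. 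Your proposal collapses this multi-step descent into one step, and the justification for isolating a single monomial is missing.

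Third, you invoke conditions (iii.2) and (iii.3) to obtain the bound $i_\ell(n)>ws_n-C_3(w)$. These are not hypotheses of Lemma~\ref{lem:important1} (the lemma assumes only (i), (ii), (iii.1)), so they are not available to you here; they are deployed only in the later sections. In fact the bound follows from nothing more than the size of $L(\mathbf{x})$: once $L(\mathbf{x})=e\,b^{r_n-i_\ell(n)}$ with $e\neq 0$ lying in a fixed finite set, comparing with $|L(\mathbf{x})|\ll |b|^{-(w-1)s_n}$ forces $i_\ell(n)\ge ws_n - C_3(w)$. No appeal to expanding gaps or long patterns is needed at this stage.
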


\begin{proof}
For simplicity we write $t:=t_n$. Let $B$ be a common denominator for $b$ and $1/b$. 
That is, $B$ is a positive integer such that both $bB$ and $B/b$ are algebraic integers. Assume $w>C_1+2$.
Multiplying \eqref{eq:main1} by $B^{ws_n}$, we get
\begin{eqnarray}
\label{eq:interm}
&& \left|\alpha B^{ws_n}b^{r_n+s_n}-\alpha B^{ws_n} b^{r_n}-B^{ws_n}p_n-\sum_{\ell=1}^t c_{\ell} B^{ws_n} b^{r_n-i_{\ell}(n)}\right|\nonumber \\
& < & \frac{2H|B|^{ws_n}}{|b|^{(w-1)s_n}}.
\end{eqnarray}
The left--hand side is $L({\bf x})$, where ${\bf x}:=(x_1,\ldots,x_{t+3})$ and $L$ is given by \eqref{eq:L}. We label the coordinates of ${\bf x}$ as $(u_1,u_2,y,z_1,\ldots,z_t)$, where 
\begin{equation}
\label{eq:x1x2}
(u_1,u_2,y):=\left(B^{ws_n} b^{r_n+s_n}, B^{ws_n} b^{r_n},B^{ws_n}p_n\right),
\end{equation}
and 
\begin{equation}
\label{eq:z}
z_{\ell}:=B^{ws_n} b^{r_n-i_{\ell}(n)}\quad {\text{\rm for}}\quad \ell=1,\ldots,t.
\end{equation}
These vectors have algebraic integer components in ${\mathcal O}_{\mathbb K}$. Indeed the first two are clear, the last $t$ are so since 
$B$ is a common denominator of $b$ and $1/b$ and $r_n+ws_n\ge r_n+i_{\ell}(n) s_n$ for all $\ell=1,\ldots,t$. The only one that is in doubt is $x_3$ but since
$$
p_n=\sum_{i=0}^{r_n+s_n} a_ib^{r_n+s_n-i},
$$
it follows that 
$$
p_nB^{r_n+s_n}=\sum_{i=0}^{r_n+s_n} a_i(bB)^{r_n+s_n-i}B^i\in {\mathcal O}_{\mathbb K}.
$$
So, in fact $x_3$ is an algebraic integer which is a multiple of $B^{(w-1)s_n-r_n}$. Recall that $w>C_1+2$, so the exponent of $B$ above is positive. 
We consider the set ${\mathcal S}$ consisting of all the infinite places of ${\mathbb K}$ (where we adopt the convention that the regular absolute value of ${\mathbb K}$ is denoted $\infty_1$), 
and the primes $p$ dividing $B$ or such 
that $|b^{(\sigma)}|_p\ne 1$ for some $\sigma\in {\text{\rm Gal}}({\mathbb K}/{\mathbb Q})$. We extend these valuations to ${\mathbb L}:={\mathbb K}(\alpha,{\mathcal H})$. 
The system of linear forms is
$$
L_{i,\nu}({\bf x}):=x_i\quad {\text{\rm for~all}}\quad (i,\nu)\in \{1,\ldots,t+3\}\cup {\mathcal S}\backslash \{(3,\infty_1)\},
$$
and 
$$
L_{3,\infty_1}({\bf x}):=L({\bf x}).
$$
For each $\nu\in {\mathcal S}$ the $t+3$ forms are linearly independent. We compute the double product
$$
\prod_{\nu\in {\mathcal S}}\prod_{i=1}^{t+3} |L_{i,\nu}({\bf x})|_{\nu},
$$
where the coordinates of ${\bf x}$ are given by \eqref{eq:x1x2} (the first three) and \eqref{eq:z} (the last $t$).  By the product formula and the fact that all coordinates except the third one are 
${\mathcal S}$-units, the subproducts corresponding to any fixed $i\ne 3$ in the set $\{1,2,\ldots,t+3\}$ (and all valuations $\nu$ in ${\mathcal S}$) is $1$. For the third coordinate, the product over the 
finite places is at least $B^{-((w-1)s_n-r_n)}$ since this coordinate is an algebraic integer divisible by $B^{(w-1)s_n-r_n}$. For the infinite place $\infty_1$, we get that
$$
|L_{3,\infty_1}({\bf x})|_{\infty_1}\ll \left(\frac{B^{ws_n}}{|b|^{(w-1)s_n}}\right)^{1/\delta_1},
$$
where $\delta_1\in \{1/D,2/D\}$. Finally, for the infinite places $\infty_k$ with $k>1$, we get that
$$
|L_{3,\infty_k}({\bf x})|_{\infty_k}=|x_3|_{\infty_k}\ll s_n |B|^{ws_n\delta_k} |B_1|^{(r_n+s_n)\delta_k},
$$
where $\delta_k\in \{1/D,2/D\}$, the implied constant can be taken to be $2|H|(1+C_1)$, and we put $B_1:=\max\{B|b^{(\sigma)}|:\sigma\in {\text{\rm Gal}}({\mathbb K}/{\mathbb Q})\}$. Thus,
$$
\prod_{\nu\in {\mathcal S}}\prod_{i=1}^{t+3} |L_{i,\nu}({\bf x})|_{\nu}\ll \frac{s_n B^{s_n+r_n} B_1^{r_n+s_n}}{|b|^{(w-1)s_n/D}}\ll  s_n\left(\frac{B_1^{2(C_1+1)}}{|b|^{(w-1)/D}}\right)^{s_n}.
$$
Taking $w$ such that 
$$
w>4D(C_1+1)\frac{\log B_1}{\log |b|}+1,
$$
we get that 
$$
\prod_{\nu\in {\mathcal S}}\prod_{i=1}^{t+3} |L_{i,\nu}({\bf x})|_{\nu}\ll \frac{s_n}{|b|^{\eta_1 s_n}}\ll \frac{1}{\|{\bf x}\|^{\eta_2}}\quad {\text{\rm provided}}\quad w>\frac{4D}{\log |b|}+1,
$$
where we can take 
$$
\eta_1:=\frac{w-1}{2D},\quad \eta_2:=\frac{\delta_1\log |b|}{2(C_1+w)\log B}=\frac{(w-1)\log |b|}{4D(C_1+w)\log B_1}
$$ 
and $w$ is large enough such that  $|b|^{\eta_1/2}>2$ (so we can use the inequality $|b|^{\eta_1 s_n/2}>2^{s_n}>s_n$ which holds for all $n\ge 1$). 
An interesting feature of $\eta_2$ is that it is bounded from below by the quantity $\log |b|/(8D\log B_1)$ for sufficiently large $w$.  Further, the height of 
our points is at least as large as $(r_n+s_n)\log |b|$, so they are ``large points".
Now the Subspace Theorem tells us that such points ${\bf x}$ lie in finitely many proper subspaces.  There are bounds on the number of such subspaces. 
For that, one looks at ``small points" and ``large points". The large points are the ones whose height exceeds the height of the form $L$. Our points have this property for 
all $n>n_0$, where $n_0:=n_0(b,{\bf a})$. The number of subspaces containing ``large points" is bounded by $\exp(O(t(w)^2))=\exp(O(w^2))$, where the constant implied by the above $O$ depends on $b$ and ${\bf a}$. Thus, there exists a finite set of linear 
equations of the form
\begin{equation}
\label{eq:linear}
L_1({\bf x})=\sum_{i=1}^{t+3} d_i^{(\lambda)} x_i=0\quad {\text{\rm for}}\quad \lambda=1,2,\ldots,T(w),
\end{equation}
with coefficients $d_i^{(\lambda)} \in \mathbb{Q}(b)$ for
$i=1,\ldots,t+3$ depending on $w$, not all zero, such that each of our
points ${\bf x}$ satisfies one of the above equations.  And all we
have to do is to show that any of the equations given at
\eqref{eq:linear} has $O_w(1)$ solutions $n$ except if it it
equivalent (proportional) to the equation $L({\bf x})=0$ or it is
equivalent to one of the additional equations \eqref{eq:indep1}
permitted by the lemma.  So, let's do it. We fix one such $\lambda$
and then omit the dependence on the superscript $\lambda$ of the
coefficients $d_i$'s. Assume that ${\bf x}$ satisfies equation
\eqref{eq:linear} with some linear form $L_1({\bf x})$ not parallel to
$L({\bf x})$. Let $j\in \{1,\ldots,t+3\}$ be such that $x_j$ appears
in $L_1({\bf x})$ with coefficient $d_j\ne 0$. Assume that it appears
in $L({\bf x})$ with coefficient $e_j$. Replace $L({\bf x})$ by
$L({\bf x})-(e_j/d_j)L_1({\bf x})$, which is not the zero form.  The
value of the left--hand side of \eqref{eq:main1} is unchanged but it
is now a linear form in $t+2$ variables
$x_1,\ldots,x_{j-1},x_{j+1},\ldots,x_{t+3}$. By the Subspace Theorem
again there is some nonzero linear equation among the variables which
cannot be $L({\bf x})-(e_j/d_j)L_1({\bf x})=0$ since that would imply
$L({\bf x})=0$, which we assume not to hold. Thus, we can eliminate
one more variable. Going in this way, we replace $L({\bf x})$ by
$$
L({\bf x})-\mu_1L_1({\bf x})-\mu_2L_2({\bf x})-\cdots,
$$ 
where at each step $L_i({\bf x})=0$ and each $L_i({\bf x})$ has at most $t+3-i$ indeterminates appearing in it with a nonzero coefficient. At the end of the day, we end up with a form in one variable. 
If that variable is $x_3$, we then get
that 
$$
|p_n|\ll_w \frac{1}{|b|^{(w-1)s_n}},
$$
which implies
\begin{equation}
\label{eq:pn}
\frac{|p_n|}{|b|^{r_n}|b^{s_n}-1|}\ll_w \frac{1}{|b|^{r_n+ws_n}}.
\end{equation}
The implied constant depends on $w$ since it comes from one of the above subspaces. If this will happen for arbitrarily large values of $w$ and infinitely many $n$, we would get that $p_n=0$. In particular, 
$\alpha=0$. Returning to our inequalities \eqref{eq:main1}, we get that the left--hand side of \eqref{eq:main1} is a linear form in ${\mathcal S}$-units, which is small. If it has at least two ${\mathcal S}$-units in it, then we can write one of finitely many ${\mathcal S}$-unit equations and use each one of them to eliminate another variable. At the end of the day, we get either $L({\bf x})=0$, which we assumed not to hold, or if  $L({\bf x})\ne 0$, there is one variable among $x_1,x_3,z_{\ell}$'s which survives. If it is among $x_1,x_2$ (so $\alpha\ne 0$), we get
$$
|b|^{r_n}\ll_w \frac{1}{b^{(w-1)s_n}},
$$
which  a bound on $n$. This was assuming $\alpha\ne 0$, for if $\alpha=0$, then $x_1,x_2$ did not appear at all. Thus,  it remains to analyse the case when the variable is among the $z_{\ell}$'s. We then get
$$
L({\bf x})=eb^{r_n-i_{\ell}(n)},
$$
which leads to the desired conclusion by dividing across by $b^{r_n}(b^{s_n}-1)$. Note that in this case since $|L({\bf x})|\ll b^{-r_n-ws_n}$, we get $i_{\ell}(n)\ge ws_n-C_3(w)$, as claimed.  
\end{proof}

To see that this is the right formulation, let us see a
multidimensional version.  We start with algebraic numbers $b_1,\ldots,b_k$ 
which are multiplicatively independent and of absolute values larger than $1$. We would like to show that 
$$
1,S_{b_1}({\bf a}),\ldots,S_{b_k}({\bf a})
$$
are linearly independent over ${\overline{\mathbb Q}}$ under certain conditions. 
To show this we can set up the same machine as in \eqref{eq:ttuu}.
We assume that $k\ge 2$ and that
\begin{equation}
\label{eq:19}
\lambda_0+\lambda_1S_{b_1}({\bf a})+\cdots +\lambda_kS_{b_k}({\bf a})=0
\end{equation}
for some $\lambda_0,\ldots,\lambda_k$ algebraic numbers not all zero. 
We write estimates \eqref{eq:ttuu} for $\alpha_i=S_{b_i}({\bf a})$ for $i=1,\ldots,k$ and take an appropriate linear combination of them to get
$$
\left|\lambda_0+\sum_{j=1}^k\lambda_j\frac{p_n(b_j)}{b_j^{r_n}(b_{j}^{s_n}-1)}+\sum_{j=1}^k\sum_{\ell=1}^{t} \frac{d_{\ell}^{(j)}}{b_{j}^{i_\ell(n)}(b_j^{s_n}-1)}\right|\ll 
\frac{1}{|b|^{r_n+ws_n}}.
$$   
Here, $|b|:=\min\{|b_j|: j=1,\ldots,k\}$ and $d_{\ell}^{(j)}:=\lambda_j c_{\ell}$ are algebraic numbers for $\ell=1,\ldots,t$ and $j=1,\ldots,k$ which are linear combinations of the original $c_{\ell}$ for $\ell=1,\ldots,t$ with 
the coefficients $\lambda_1,\ldots,\lambda_k$. They belong to the finite set $\sum_{j=1}^k \lambda_{j} {\mathcal H}$. Here is the next result. 

\begin{lemma}
\label{lem:important2}
Assume that ${\bf a}$ satisfies (i), (ii), (iii.1). Assume that
$b_1,\ldots,b_k$ are algebraic, $|b_j|>1$ for $j=1,\ldots,k$ and
that there is a nontrivial linear relation with algebraic coefficients
among $1,S_{b_1}({\bf a}),\ldots,S_{b_k}({\bf a})$:
\begin{equation}
\label{eq:lindep}
\lambda_0+\sum_{j=1}^k\lambda_{j}S_{b_{j}}({\bf a})=0.
\end{equation}
Then there exists a constant $C_4:=C({\bf a},b_1,\ldots,b_k,\lambda_0,\lambda_1,\ldots,\lambda_k)$ such that for each $w>C_4$ and $n>n_w$, there exists a finite set $E$ depending on $w$ such that for all but finitely many $n$, one of the equations 
\begin{equation}
\label{eq:indep2}
\lambda_0+\sum_{j=1}^k \lambda_{j} \frac{p_n(b_{j})}{b_j^{r_n}(b_\ell^{s_n}-1)}+\sum_{j=1}^k\sum_{u=1}^{t_n} \frac{d_{i_u(n)}^{(j)}}{b_j^{i_u(n)}}=\frac{e\prod_{m=1}^k b_m^{\delta_j s_n}}{b_j^{{i_{\ell}(n)}}(b_1^{s_n}-1)\cdots (b_k^{s_n}-1)}
\end{equation}
holds for some $j=1,\ldots,k$ and $\ell \in \{1,\ldots,t_n\}$, where if $e\ne 0$ then $\delta_i\in \{0,1\}$ for $i=1,\ldots,m$ with $\delta_j=0$. In addition, if $e\ne 0$, then $i_{\ell}(n)>C_5 ws_n$ for some constant $C_5$ provided $s_n$ is large enough.  
\end{lemma}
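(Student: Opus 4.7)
The plan is to follow the argument of Lemma~\ref{lem:important1} verbatim in the multi-base setting, with the assumption \eqref{eq:lindep} replacing the algebraicity of a single $\alpha$. First, applying the estimate derived in the proof of Lemma~\ref{lem:important1} to each $\alpha_j:=S_{b_j}({\bf a})$ separately, multiplying the $j$-th copy by $\lambda_j$, and summing across $j$, the relation \eqref{eq:lindep} cancels the $\alpha_j$'s and leaves
$$
\left|\lambda_0+\sum_{j=1}^k\lambda_j\frac{p_n(b_j)}{b_j^{r_n}(b_j^{s_n}-1)}+\sum_{j=1}^k\sum_{\ell=1}^t\frac{d_\ell^{(j)}}{b_j^{i_\ell(n)}(b_j^{s_n}-1)}\right|\ll\frac{1}{|b|^{r_n+ws_n}},
$$
where $|b|:=\min_j|b_j|$, $d_\ell^{(j)}:=\lambda_j c_\ell$, and as before I may assume $t_n=t$ and $(c_1,\ldots,c_t)$ are fixed by a pigeonhole argument.

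Next, I clear denominators by multiplying through by $B^{kws_n}\prod_{j=1}^k b_j^{r_n}(b_j^{s_n}-1)$, where $B$ is a common denominator of all $b_j$ and $b_j^{-1}$. The left-hand side becomes a linear form $L({\bf x})$ whose indeterminates are (a) the $2^k$ monomials $\prod_m b_m^{r_n+\eps_m s_n}$ with $\eps\in\{0,1\}^k$ coming from expanding $\lambda_0\prod_j b_j^{r_n}(b_j^{s_n}-1)$; (b) the $k\cdot 2^{k-1}$ products $p_n(b_j)\prod_{m\ne j}b_m^{r_n+\eps_m s_n}$; and (c) the $kt\cdot 2^{k-1}$ products $b_j^{r_n-i_\ell(n)}\prod_{m\ne j}b_m^{r_n+\eps_m s_n}$. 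Once scaled by $B^{kws_n}$ these are all algebraic integers in $\mathbb{L}:=\mathbb{Q}(b_1,\ldots,b_k,\lambda_0,\ldots,\lambda_k,\mathcal{H})$; the type (a) and (c) variables are $\cS$-units for $\cS$ the set of infinite places of $\mathbb{L}$ together with the primes dividing $B$ or at which some conjugate of some $b_j$ is non-unital, while the type (b) variables carry only a controlled power of $B$ as denominator, exactly as in Lemma~\ref{lem:important1}. I then apply the Subspace Theorem to the system whose forms are the coordinates at every (place, index) pair except one archimedean place where $L$ is used. By the product formula the double product telescopes and leaves a bound $\ll|b|^{-c(w-1)s_n/D}$ against a height of size $|b|^{O(s_n)}$; the multiplicative independence of $b_1,\ldots,b_k$ enters here, guaranteeing that distinct exponent tuples yield distinct $\cS$-units, so that the coordinates are pairwise non-proportional and the forms at each place are linearly independent.

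Finally, choosing $w$ large enough in terms of $b_1,\ldots,b_k,\lambda_0,\ldots,\lambda_k,{\bf a}$ (this determines $C_4$) beats the $\|{\bf x}\|^{-\delta}$ threshold, so all but finitely many of our points lie in a finite union of proper subspaces. I then iterate the elimination argument of Lemma~\ref{lem:important1}: each subspace relation not proportional to $L$ lets me eliminate one variable and reapply the Subspace Theorem to the reduced form. After finitely many reductions the form $L({\bf x})$ collapses to a single-variable monomial. A surviving (a)- or (b)-type variable forces an inequality that pins $n$ down to finitely many values using $|b_m|>1$ for all $m$, and a surviving (c)-type variable $b_j^{r_n-i_\ell(n)}\prod_{m\ne j}b_m^{r_n+\eps_m s_n}$, once divided back by $\prod_m b_m^{r_n}(b_m^{s_n}-1)$, is exactly the right-hand side of \eqref{eq:indep2} with $\delta_m=\eps_m$ for $m\ne j$ and $\delta_j=0$. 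The smallness $|L({\bf x})|\ll|b|^{-(w-1)s_n}$ then yields $i_\ell(n)\ge C_5 ws_n$, as required. The main obstacle is the bookkeeping of this iterated elimination: verifying that constants produced by successive applications of the Subspace Theorem remain uniform in $n$, and checking that the surviving monomial at the last step is of exactly the form prescribed by \eqref{eq:indep2} rather than some spurious combination. The combinatorial check involves tracking up to $2^k$ candidate ``digit patterns'' and is considerably more delicate than in Lemma~\ref{lem:important1}, though it requires no new tool beyond the Subspace Theorem and the hypothesis of multiplicative independence.
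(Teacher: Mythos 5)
Your proposal follows the paper's proof of Lemma~\ref{lem:important2} essentially verbatim: take the $\lambda_j$-weighted combination of the one-dimensional estimates so that \eqref{eq:lindep} cancels the $S_{b_j}({\bf a})$ terms, clear denominators to get a vector with $\cS$-unit coordinates of types (a) and (c) plus controlled coordinates of type (b) involving $p_n(b_j)$, apply the Subspace Theorem with coordinate forms and a single copy of $L$ at one archimedean place, and then iterate the elimination to collapse $L$ to a monomial, which after dividing back is exactly the right-hand side of \eqref{eq:indep2}. One small but worth-noting slip: linear independence of the forms $L_{i,\nu}$ at a fixed place has nothing to do with multiplicative independence of $b_1,\ldots,b_k$ — these are the coordinate forms $x_i$ together with one occurrence of $L$, and are independent as soon as $L$ has a nonzero coefficient on the replaced coordinate, which holds because some $\lambda_j\ne 0$. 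Indeed, multiplicative independence is not a hypothesis of this lemma; it enters only later, in the sections deducing Theorem~\ref{thm:2} from the two lemmas.
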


\begin{proof}
We work with ${\mathbb K}:={\mathbb Q}(b_1,\ldots,b_k)$ and as before 
we denote its degree by $D$. Then we get
\begin{eqnarray}
\label{eq:main3}
&& \left|\lambda_0\prod_{j=1}^kb_j^{r_n}(b_\ell^{s_n}-1)+\sum_{j=1}^k \lambda_{j} \left(\prod_{\substack{m\ne j\\ 1\le m\le k}} b_m^{r_n}(b_m^{s_n}-1)\right) p_n(b_{j})\right.\nonumber\\
& - & \left.\sum_{j=1}^k b_j^{r_n} \sum_{\ell=1}^{t} d_{\ell}^{(j)}b_j^{-i_{\ell}(n)} \prod_{\substack{1\le m\le k\\ m\ne j}}^k (b_m^{s_n}-1)\right|\nonumber\\
& \ll & \frac{1}{|b|^{r_n+ws_n-C_6(r_n+s_n)}}\ll \frac{1}{|b|^{(w-C_7)s_n}},
\end{eqnarray}
where $|b|:=\min\{|b_j|: 1\le j\le k\}$. Here, we can take $C_6:=(\sum_{j=1}^k \log |b_j|)/\log |b|)$, and $C_7:=C_6(C_1+1)$. In the left, we expand all parenthesis and have a linear form in 
$2^k+k2^{k-1}+2^{k-1}kt$ variables. Putting 
$$
M:=\prod_{j=1}^k b_{j}^{r_n+s_n},
$$
our variables are 
$$
M\left(\prod_{j\in {\mathcal T}} b_{j}^{s_n}\right)^{-1},\quad T\subseteq \{1,\ldots,k\},
$$
$$
M/(b_{j}^{r_n+s_n})\times \left(\prod_{m \in {\mathcal T}_{j}} b_{m}^{s_n}\right)^{-1}p_n(b_{j}),\quad 1\le j\le k,~T_{j}\subseteq \{1,\ldots,k\}\backslash \{j\},
$$
and
$$
Mb_j^{-i_{\ell}(n)}\left(\prod_{m\in {\mathcal T}_j} b_{m}^{s_n}\right)^{-1},\quad T_j\subseteq \{1,\ldots,k\}\backslash \{j\},~1\le j\le k,~1\le \ell\le t.
$$
Letting $B$ be a common denominator for $b_j,1/b_{j}$ for all $j=1,\ldots,k$, we multiply both sides of \eqref{eq:main3} by $B^{kr_n+ks_n}$. The finite set of valuations ${\mathcal S}$ 
consists of the infinite ones of ${\mathbb K}$ together with the finite ones $p$ such that either $p$ is a factor of $B$ or $|b_j^{(\sigma)}|_p\ne 1$ for some $j=1,\ldots,k$ and some $\sigma\in {\text{\rm Gal}}({\mathbb K}/{\mathbb Q})$. 
We extend these valuations  to ${\mathbb L}:={\mathbb K}(\lambda_0,\ldots,\lambda_k,{\mathcal H})$. 
All coordinates except for $k2^{k-1}$ of them (the ones involving the expressions $p_n(b_j)$ for $j=1,\ldots,k$) are ${\mathcal S}$-units. We take the same system of forms 
namely $L_{i,\nu}({\bf x}):=x_i$ except for one form corresponding to the infinite place  $\nu_1$ which is the embedding corresponding to $b$, and the index $i:=2^k+1$ (first indeterminate containing one of $p_n(b_1)$), where we take it to be $L_{2^k+1,\nu_1}({\bf x})=L({\bf x})$ where this form is the one from the left--hand side of 
\eqref{eq:main3} after expanding all the parenthesis. A similar calculation 
as in Lemma~\ref{lem:important1} shows that with this system of forms and for the above points ${\bf x}$, we have, 
$$
\prod_{\nu\in {\mathcal S}}\prod_{i=1}^{2^k+k2^{k-1}+2^kkj} |L_{\nu,i}({\bf x})|_{\nu}\ll_{H,k,\Lambda} \frac{(s_nB_1^{2(C_1+1)s_n})^{k2^{k-1}}}{|b|^{(w-C_7)s_n/D}}.
$$
The implied constant depends on $k,H$ and $\Lambda$, where this last parameter is an upper bound on the houses (largest conjugate) of the algebraic integers $\lambda_0,\ldots,\lambda_k$ and 
$$
B_1:=\max\{B|b_j^{(\sigma)}|: 1\le j\le k, \sigma\in {\text{\rm Gal}}({\mathbb K}/{\mathbb Q})\}.
$$    
Taking $w>4D(C_1+1)2^{k-1}k\log B_1/\log |b|+C_7$, it follows that the factor involving $B_1$ can be absorbed 
into the denominator at the cost of halving the exponent of $|b|$, namely from $(w-C_7)/(2D)$ replacing it by $(w-C_7)/(4D)$. Assuming further that $(w-C_7)/(8D)>k2^{k-1}/\log |b|$, we may in fact 
also absorb the power of $s_n$ from the denominator of the right--hand side into the numerator at the cost of replacing the exponent of $|B|$ from $(w-C_7)/(4D)$ by $(w-C_7)/(8D)$. Thus,
$$
\prod_{\nu\in {\mathcal S}}\prod_{i=1}^{2^k+k2^{k-1}+2^kkj} |L_{\nu,i}({\bf x})|_{\nu}\ll_{H,k,\Lambda}\frac{1}{|b|^{(w-C_7)/(8D)}}\ll \frac{1}{\| {\bf x}\|^{\eta}},
$$
where we can take 
$$
\eta:=\frac{(w-C_7)\log |b|}{8D((C_1+w)\log B_1}.
$$
As in the $1$-dimensional case, $\eta$ is bounded from below by $\log |b|/(9D\log B_1$) once $w$ is sufficiently large. The conclusion of the 
Subspace Theorem is that ${\bf x}$ satisfies one of finitely many linear relations. The number of relations is exponential in the square of the number of variables so it is $\exp(O(w^2))$, where now 
the constant implied by $O$ depends on $k,~{\bf a}$ and $b_1,\ldots,b_k$ and this holds for all $n>n_0$, where now $n_0$ depends on $|b|,~B_1,~H,~\Lambda$ only. We need to exploit these relations.
In the $1$-dimensional case we succeeded in proving that all but finitely many $n$ satisfy the linear relation given by imposing that the left--hand side of \eqref{eq:main1} is either $0$ or one of the involved variables
arising from the exponents $i_{\ell}(n)$ for some $\ell\in \{1,\ldots,t\}$. We will prove that the same holds for \eqref{eq:main3}. Let's see the details. Assume $L({\bf x})\ne 0$
infinitely often. We pick a linear form $L_1({\bf x})$ such that $L_1({\bf x})=0$. Clearly, $L_1({\bf x})$ is not parallel to $L({\bf x})$. 
We pick an indeterminate $x_{j}$ which appears in $L_1({\bf x})$ with nonzero coefficient and replace $L({\bf x})$ by
$L({\bf x})-\mu_1L_1({\bf x})$ for some suitable nonzero coefficient $\mu_1$ in order to eliminate $x_j$, obtaining in  such a way a ``small" linear form in fewer variables (at least the variable $x_j$ is no longer present). Note that this new small linear form is not zero since otherwise $L({\bf x})=0$, which is something we assume not to hold. 
We continue in this way at each stage creating a  ``small" linear form in fewer variables which is a linear combination of $L({\bf x})$ 
with other linear forms encountered along the way, and all except for $L({\bf x})$ vanish at our vector ${\bf x}$. Hence, the new linear form in fewer variables does not vanish at our ${\bf x}$. 
At the end we end up with the last linear form in one variable $L'({\bf x})=x_i$ being small. If this is one of the small indeterminates containing $p_n(b_j)$ for some $j=1,\ldots,k$, (and $\lambda_j\ne 0$ otherwise these coordinates did not appear to begin with) then we argue as before that 
$p_n(b_j)=0$ for all but finitely many $n$ satisfying this equation. So, again we get fewer variables and continue. So, let us assume that $x_i$ is an ${\mathcal S}$-unit indeterminate. If it is 
one of the first $2^k$ small ${\mathcal S}$-units indeterminate, we get a contradiction for large $n$ unless $\lambda_0=0$ so these variables did not appear to begin with.
The final case is when the variable is one of the large ${\mathcal S}$-unit indeterminates involving some $i_{\ell}(n)$ in the exponent. Then we get
$$
L({\bf x})=eb_j^{-i_{\ell}(n)} (b_1\cdots b_k)^{r_n}\prod_{m}^k b_m^{\delta_m s_n},\quad  \delta_m\in \{0,1\},~1\le m\le k,~\delta_j=0.
$$
Further, $e\in E$, where $E$ is finite (depends on $w$) and there are finitely many choices for $(j,\delta_1,\ldots,\delta_k)$. The desired equation follows by dividing both sides above by 
$\prod_{i=1}^k b_i^{r_n}(b_i^{s_n}-1)$. Note further that since $|L({\bf x})|\ll |b|^{-(w-C_7)s_n}$, we get that
\begin{equation}
\label{eq:10}
|b_j|^{r_n-i_{\ell}(n)}\prod_{\substack{1\le m\le k\\ m\ne j}} ^k |b_m|^{r_n+\delta_m s_n} \ll_w \frac{1}{|b|^{(w-C_7)s_n}}\quad {\text{\rm where}}\quad \delta_m\in \{0,1\}
\end{equation}
for all $m=1,\ldots,j$. This gives $i_{\ell}(n)\log |b_j|\ge (w-C_8)s_n\log |b|+O_w(1)$, where $C_8:=C_7+k(C_1+1)\log |b^*|/\log |b|$ and $b^*:=\max\{|b_j|, 1\le j\le k\}$. 
In particular, taking $C_4:=2C_8$, $w>C_4$, and $s_n>n_w$, we see that there is indeed $C_{5}$, which can be taken to be $C_5:=C_4\log |b|/(2\log |b^*|)$, such that if $w>C_{4}$ and $s_n>n_w$, then $i_{\ell}(n)>C_{5} w s_n$, which is what we wanted. 
\end{proof}


\section{The Easy Case}
We will use Lemmas~\ref{lem:important1} and~\ref{lem:important2} to
obtain linear independence properties of the numbers $S_b({\bf a})$
over $\overline{\mathbb{Q}}$.  For this we augment
conditions (iii.1)--(iii.3) of Section~\ref{sec:variations}.
There are several different cases.  We start with the easiest one.

\begin{itemize}
\item[(iv.1)] For any $\varepsilon\in (0,1)$ there exist arbitrarily
  large positive integers $w$ such that for infinitely many $n$, the
  interval $[r_n+\varepsilon ws_n,r_n+ws_n]$ does not contain any
  $i_{\ell}(n)$.
\end{itemize}

Taking $\varepsilon$ sufficiently small (smaller than $1/2$ in the
case of Lemma \ref{lem:important1} and smaller than $C_6$ in the case
of Lemma \ref{lem:important2}), choosing a large $w$ which is
convenient for us and satisfies (iv.1), then for infinitely many large
$n$ (for example larger than $2C_4(w))$) condition (iv.1) will apply
to show that in the right--hand side of \eqref{eq:indep1} and
\eqref{eq:indep2} we have the number $e=0$.  For the 1-dimensional
case we get that
$$
\alpha=\frac{p_n}{b^{r_n}(b^{s_n}-1)}+\sum_{\ell=1}^{t} \frac{c_{\ell}}{b^{i_{\ell}(n)}(b^{s_n}-1)}
$$
holds. The right--hand side encodes the values of $a_n$ only up to $r_n+ws_n$. Comparing it with the expression for $\alpha$, we get that 
\begin{equation}
\label{eq:ell}
\sum_{\substack{p>r_n+ws_n\\ p\not\equiv i_{\ell}(n)\pmod {s_n},~\ell=1,2,\ldots,t}} \frac{a_p-a_{\overline{p}}}{b^p}+O\left(\sum_{p>f_0(s_{n+1}) s_n}\frac{1}{b^p}\right)=0,
\end{equation}
where we put ${\overline{p}}$ for the unique index in $ [r_n+1,r_n+s_n]$ such that $p\equiv {\overline{p}}\pmod {s_n}$. The second sum encodes the difference of values of $a_{i_\ell(n)+ms_n}$ 
and $a_{i_{\ell}(n)}$ for $m>f_0(s_{n+1})s_n$. And it remains to decide if relation \eqref{eq:ell} 
can happen infinitely often. 

Here is an easy to check condition under which it cannot happen infinitely often:
\begin{itemize}
\item[(v.1)] ${\text{\rm ETGP}}$: Letting $\kappa_1(n)<\kappa_2(n)$ be
  the first two indices $p$ which are larger than $r_n+ws_n$ such that
  $\kappa_i(n)\not\equiv i_{\ell}(n)$ for all $\ell=1,\ldots,t$ and
  both $i=1,2$, $a_{p}\neq a_{\overline{p}}$ for both
  $p\in \{\kappa_1(n),\kappa_2(n)\}$ and
  $a_{\kappa_1(n)}\neq a_{\kappa_2(n)}$, suppose that
  $\kappa_2(n)=o(f_0(s_{n+1})s_n)$ as $n\to\infty$ and that both
  $\kappa_1(n)$ and $\kappa_2(n)-\kappa_1(n)$ tend to infinity with
  $n$.
\end{itemize}
Then relation \eqref{eq:ell} implies that 
\begin{eqnarray}
\label{eq:ending}
1 & \ll & |a_{\kappa_1(n)}-a_{{\overline{\kappa_1(n)}}}|\nonumber\\
& = & \left|\sum_{p\ge \kappa_2(n)} \frac{a_{p}-a_{\overline{p}}}{b^{p-\kappa_1(n)}}+O\left(\sum_{p\ge f_0(s_{n+1})s_n} \frac{1}{b^{p-\kappa_1(n)}}\right)\right|\nonumber\\
& \ll & \frac{1}{|b|^{\kappa_2(n)-\kappa_1(n)}},
\end{eqnarray}
which yields a contradiction for values of $n$ such that $\kappa_2(n)-\kappa_1(n)$ is sufficiently large.  We call (v.1) ETGP for ``Expanding Tail Gaps Property". 
Here is what we have proved.

\begin{theorem} 
\label{thm:u1}
Assume that ${\bf a}$ satisfies (i), (ii), (iii.1)--(iii.3), (iv.1) and (v.1). Then for every algebraic $b$ with $|b|>1$ the number $S_b({\bf a})$ is transcendental.
\end{theorem}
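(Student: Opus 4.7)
The plan is to prove the contrapositive: assume $\alpha := S_b({\bf a})$ is algebraic and derive a contradiction by combining Lemma~\ref{lem:important1} with the hypotheses (iii.2), (iii.3), (iv.1), (v.1). The argument has three stages: (a) use Lemma~\ref{lem:important1} to produce a near-identity for $\alpha$; (b) use (iv.1) to force the exceptional term $e$ in~\eqref{eq:indep1} to vanish; (c) use (iii.3) together with (v.1) to contradict the resulting exact finite representation of $\alpha$.

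For stage (a)--(b), I would fix $\varepsilon \in (0,1/2)$ and invoke (iv.1) to select a single $w > \max\{C_2,\, 2(C_1+1)/(1-\varepsilon)\}$ for which there exist infinitely many $n$ such that the interval $[r_n+\varepsilon w s_n,\, r_n+w s_n]$ contains no $i_\ell(n)$. Restricting attention to such $n > n_w$, Lemma~\ref{lem:important1} outputs an element $e \in E$ and, if $e \ne 0$, an index $\ell$ with $i_\ell(n) > w s_n - C_3(w)$. Using $r_n \le C_1 s_n$ and the choice of $w$, for all sufficiently large $n$ one has $w s_n - C_3(w) \ge r_n + \varepsilon w s_n$, so such an $i_\ell(n)$ would fall inside the forbidden interval of (iv.1); hence $e = 0$, and we obtain the exact identity
$$
\alpha = \frac{p_n}{b^{r_n}(b^{s_n}-1)} + \sum_{\ell=1}^{t_n} \frac{c_\ell}{b^{i_\ell(n)}(b^{s_n}-1)}
$$
for infinitely many $n$.

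Expanding each factor $1/(b^{s_n}-1) = \sum_{j\ge 1} b^{-j s_n}$ and comparing the resulting series coefficient-by-coefficient with $\alpha = \sum_{p\ge 0} a_p b^{-p}$, property (iii.1) guarantees agreement for $p \le r_n + w s_n$, and the right-hand side assigns value $a_{i_\ell(n)}$ to every position $i_\ell(n) + m s_n$. By (iii.3) these predicted values coincide with the true values $a_{i_\ell(n)+m s_n}$ for $0 \le m \le f_0(s_{n+1})$, producing only an exponentially small tail error of size $O(|b|^{-f_0(s_{n+1}) s_n})$. Subtracting the two expressions for $\alpha$ yields exactly the relation~\eqref{eq:ell}. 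For stage (c), I extract the two indices $\kappa_1(n) < \kappa_2(n)$ furnished by (v.1), multiply~\eqref{eq:ell} by $b^{\kappa_1(n)}$ to isolate the leading term $a_{\kappa_1(n)} - a_{\overline{\kappa_1(n)}}$, and bound the remaining sum geometrically by $O(|b|^{-(\kappa_2(n)-\kappa_1(n))})$, absorbing the tail via $\kappa_2(n) = o(f_0(s_{n+1}) s_n)$. The left-hand side has absolute value bounded below by a positive constant (the differences of elements of $\mathcal H$ form a finite set of algebraic integers, and the difference here is nonzero by (v.1)), while the right-hand side tends to $0$ since $\kappa_2(n)-\kappa_1(n) \to \infty$---the desired contradiction, exactly as in~\eqref{eq:ending}.

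The main obstacle is the coordination of parameters: $w$ must simultaneously exceed the threshold $C_2$ from Lemma~\ref{lem:important1}, be large enough that the bound $w s_n - C_3(w)$ from the lemma lands inside the forbidden interval of (iv.1), and be one of the infinitely many $w$'s for which (iv.1) itself furnishes infinitely many workable $n$. Once this alignment is carried out, the remainder is a bookkeeping comparison of the two expansions for $\alpha$ and a single geometric-series estimate. A minor but essential point is that $|a_{\kappa_1(n)} - a_{\overline{\kappa_1(n)}}|$ is bounded below uniformly in $n$; this is immediate from the finiteness of $\mathcal H$ and the assumption in (v.1) that this difference is nonzero.
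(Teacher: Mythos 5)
Your proposal is correct and follows essentially the same route as the paper: invoke Lemma~\ref{lem:important1}, force $e=0$ via (iv.1), compare the resulting exact rational expression for $\alpha$ against its series expansion to get \eqref{eq:ell}, and then derive the contradiction \eqref{eq:ending} from (v.1). Your parameter bookkeeping (choosing $w$ simultaneously above $C_2$, above $2(C_1+1)/(1-\varepsilon)$, and among the arbitrarily large $w$'s furnished by (iv.1), then restricting to the infinitely many $n>n_w$ for which the interval $[r_n+\varepsilon w s_n, r_n+w s_n]$ is free of $i_\ell(n)$'s) is a slightly more explicit rendering of what the paper leaves implicit, but the substance and order of the steps coincide with the paper's argument.
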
 
In particular, $\alpha\ne 0$ in the case of Lemma \ref{lem:important1} under (iv.1). As such $p_n(b_j)\ne 0$ for any $j$ and $n$ large enough under (iv.1). 

The multidimensional version works equally well. Namely, we write equation \eqref{eq:indep2} with $e=0$ for a suitable large $w$ and infinitely many $s_n$.  We subtract \eqref{eq:indep2} from \eqref{eq:lindep} and get the analogue of \eqref{eq:ending} 
\begin{equation}
\label{eq:kappas1}
\left|(a_{\kappa_1(n)}-a_{\overline{\kappa_1(n)}})\left(\sum_{\ell=1}^k\lambda_{\ell} b_{\ell}^{-\kappa_1(n)}\right)\right|\ll \sum_{t\ge \kappa_2(n)} \frac{|a_{t}-a_{\overline{t}}|}{|b|^{t}}\ll \frac{1}{|b|^{\kappa_2(n)}}.
\end{equation}
In case $|b_1|,\ldots,|b_k|$ are all distinct, the above inequality is impossible for large $n$ by (iv.1). Namely, we leave on the left--hand side only $b_1$ which realises the minimum absolute values among them and put the rest of the terms in the right--hand side and apply the previous argument. This will take care of (i) of Theorem \ref{thm:2} assuming (iv.1) holds. Let us see how to deal with (ii) of Theorem \ref{thm:2} assuming (iv.1) holds.

So, assume that $r=2$. In this case, dividing by $\lambda_1$ we get
$$
\left|(a_{\kappa_1(n)}-a_{\overline{\kappa_1(n)}})\left(1+\frac{\lambda_2}{\lambda_1}\left(\frac{b_1}{b_2}\right)^{\kappa_1(n)}\right)\right|\ll \frac{1}{|b_1|^{\kappa_2(n)-\kappa_1(n)}}.
$$
and in the left-hand side the second factor can be small. However, by
lower bounds for linear forms in logarithms, we have that
\begin{equation}
\label{eq:linearform}
\left|1+\frac{\lambda_2}{\lambda_1}\left(\frac{b_1}{b_2}\right)^{\kappa_1(n)}\right|\gg \frac{1}{\kappa_1(n)^{C_{9}}}
\end{equation}
for some constants $C_{9}>0$ depending on $b_1,b_2,\lambda_1,\lambda_2$. Thus, the above inequalities \eqref{eq:kappas1} and \eqref{eq:linearform} give us
$$
\kappa_2(n)-\kappa_1(n)\le C_{9} \log \kappa_1(n)+O(1).
$$
Again we can increase the value of $C_{9}$ (say replace $C_{9}$ by $C_{10}=2C_{9}$) and assume that $n$ is large in order to omit the additive $O(1)$ term in the right--hand side above. Thus,
\begin{equation}
\label{eq:kappas}
\kappa_2(n)-\kappa_1(n)\le C_{10} \log \kappa_1(n).
\end{equation}

So, we formulate the following criterion.

\begin{itemize}
\item[(v.2)] Assume $r=2$ and that for any $C_{10}\ge 1$, inequality \eqref{eq:kappas} holds only for finitely many $n$.
\end{itemize}

So, (v.1) and (v.2) are enough to deal with statements (i) and (ii) of our Theorem \ref{thm:2} assuming (iv.1) is satisfied. Part (iii) is dealt with in the next section. 

\section{The Harder Case}

\begin{itemize}
\item[(iv.2)] Assume that there exists $C_{11}$ and $w_0$ such that
  for all $w>w_0$, the interval $[r_n+C_{11}ws_n,r_n+ws_n]$ contains at least one $i_{\ell}(n)$. 
\end{itemize}

Since $\kappa_1(n)$ is the first $i_{\ell}(n)$ larger than $r_n+ws_n$ (for some new $w'$), we get that $\kappa_1(n)\in [r_n+ws_n,r_n+C_{12}ws_n]$, where $C_{12}=C_{11}^{-1}$. Further,
$\kappa_2(n)<r_n+C_{12}^2 ws_n$, etc. So, let $\kappa_1(n)<\kappa_2(n)<\kappa_j(n)<\ldots$ be the points starting the progressions of changes after $r_n+ws_n$ as we increase $w$. As we have seen,
$\kappa_j(n)<r_n+C_{12}^j w s_n$. In particular, $\kappa_j(n)/s_n=O_j(w)$. In addition we want

\begin{itemize}
\item[(v.3)] There exist a function $f_1(w)$ tending to infinity such that for large $w$ and any fixed $j$ there exists $i$ such that 
$$
\kappa_i(n)<\kappa_i(n+1)<\cdots<\kappa_{i+j}(n)<r_n+w^2s_n,
$$
and furthermore $\kappa_{i+j}(n)-\kappa_i(n)\ll_j \kappa_{i+j}(n)/f_1(w)$.
\item[(v.4)] Furthermore, there exists a number $L$ such that if $j>L$, there are $\ell'>\ell\in \{i,i+1,\ldots,i+j\}$, with $k_{\ell'}(n)-k_\ell(n)\gg_j \kappa_{\ell'}(n)/f_2(w)$ for some function $f_2(w)$ tending to infinity.  
\end{itemize} 

Let us now finish. We return to our equations \eqref{eq:indep1} and to its multidimensional analogue \eqref{eq:indep2}. We let $K$ be larger than the bound $O(1)$ from (iii.1). We let $j:=3K+3$. Part (v.4) above gives us a string of $\kappa_i(n),\ldots,\kappa_{i+j}(n)$ which are close together. Note that all of them are  of the form $i_{\ell}(n)$ for some $\ell$ once we change $w$ to $w^2$. By (iii.1), the interval $[\kappa_i(n),\kappa_{i+j}(n)]$ will contain at least three multiples of $s_n$. Let any of the middle one (so not the first or last) be of the form $u+w_1s_n$ for some $u\le r_n$. We change $w$ to $w_1$. That is we cut--off our problem at $r_n+w_1s_n$. 
Everything (number of variables, coefficients $c_i$, etc.) are in finitely many configurations controlled by $w^2$. Let $i_0$ be such that $\kappa_{i+i_0}(n)\le r_n+w_1s_n<\kappa_{i+i_0+1}(n)$. 
As far as this $r_n+w_1s_n$ is concerned, the numbers $\kappa_{i+\ell}(n)$ for $\ell\le i_0$ are $i_{\ell'}(n)$ for some $\ell'\le t_n=O(w_1)$ and $\kappa_{i+i_0+1}(n),~\kappa_{i+i_0+2}(n)$ and larger ones are $\kappa_1(n),~\kappa_2(n)$, etc. Since the relative ratios $\kappa_{i+\ell}(n)/\kappa_{i+\ell-1}(n)$ tend to $1$ (can be made arbitrarily close to $1$ by (v.3) and by choosing a sufficiently large $w$), in the left--hand sides we only keep $b_1,\ldots,b_r$ among the numbers which realize the minimum of the absolute values and put the rest in the other side. The same goes for the eventual variable involving $e$ 
(if $e$ is nonzero), which we expand in series using the products of $1/(b_m^{s_n}-1)$ for $m=1,\ldots,k$ and keep only the first term. And we get
$$
\left| \sum_{m=1}^r \frac{c_m}{b_m^{\kappa_{i+i_0+1}(n)}}-\frac{e}{b_j^{i_{\ell}(n)}\prod_{m=1}^j b_m^{(1-\delta_m)s_n}}\right|\ll \frac{1}{|b_1|^{(1+\delta_w)\kappa_{i+i_0}(n)}}
$$
for some positive $\delta_{w_1}$ which depends on $w_1$, but is otherwise bounded from below in terms of $w$. In the left, we have a linear form in ${\mathcal S}$-units which is small. Thus, there are finitely many linear equations in these variables. 
We may assume that they are non-degenerate. Any equation involving two of the $b_j$'s will give only finitely many values for $\kappa_{i+i_0+1}(n)$ since the $b_j$'s are multiplicatively independent. 
So, $r=1$ and $e$ is nonzero. It now follows that the only possibility is that identically $c_1/b_1^{\kappa_{i+i_0+1}(n)}$ equals the unknown involving $e$ with finitely many exceptions. This implies that $j=1$, and $\delta_m=0$ for $m\ne j$, so $\kappa_{i+i_0+1}(n)=i_{\ell}(n)+O_w(1)$. Since $\kappa_{i+i_0+1}(n)-i_{\ell}(n)$ tends to infinity, we get that $O_w(1)$ is not present so 
$\kappa_{i+i_0+1}(n)=i_{\ell}(n)$, but this is wrong since $i_{\ell}(n)<r_n+ws_n\le r_n+w_1s_n<\kappa_{i+i_0+1}(n)$, namely these two changes were sitting on opposite sides of $r_n+w_1s_n$.
This argument shows that under conditions (iv.2), (v.3) and (v.4) we may assume that $e=0$ in equations \eqref{eq:indep1} and \eqref{eq:indep2}. Now the argument from the previous section takes care of the case $r=1$ so we assume that $r\ge 2$. We need to deal with \eqref{eq:ell}. We may move all the $b$'s such that $|b_i|>|b|$ (so $i\ge r+1$), to the right--hand side and leave only the ones with the same absolute value in the left. In a first step we go to inequality \eqref{eq:kappas1}. By the Subspace theorem, for any $\varepsilon>0$, if $n$ is large enough the left--hand side of \eqref{eq:kappas1} exceeds $|b|^{(1+\varepsilon)\kappa_1(n)}$. Thus, 
if $\kappa_2(n)>(1+2\varepsilon)\kappa_1(n)$ for all large  $n$, then we are done. If not, it means that $\kappa_2(n)$ is very close to $\kappa_1(n)$. We shall assume that $\kappa_2(n)/\kappa_1(n)<\log |b_{r+1}|/\log |b_1|$ (in case $r<k$). Then we can also incorporate the tails corresponding to $\kappa_2(n)$ in the left, so as to write it as 
\begin{eqnarray*}
&& \left|(a_{\kappa_1(n)}-a_{\overline{\kappa_1(n)}})\left(\sum_{\ell=1}^r \frac{\lambda_\ell}{b_\ell^{\kappa_1(n)}}\right)+(a_{\kappa_2(n)}-a_{\overline{\kappa_2(n)}})\left(\sum_{\ell=1}^r \frac{\lambda_\ell}{b_\ell^{\kappa_2(n)}}\right)
\right|\\
& \ll & \max\left\{\frac{1}{b^{\kappa_3(n)}},\frac{1}{|b_{r+1}|^{\kappa_1(n)}}\right\}.
\end{eqnarray*}
The left--hand side again by the Subspace Theorem exceeds $|b|^{(1+\varepsilon)\kappa_1(n)}$ unless there are some degeneracies (zero subsums in the left--hand side). Since the $b_i$'s are multiplicatively independent, for large $n$ the only degeneracies can come from some $i$ and from the terms $b_i^{\kappa_1(n)}$ and $b_i^{\kappa_2(n)}$, and would lead to the conclusion that $b_i^{\kappa_2(n)-\kappa_1(n)}$ is in a fixed finite set, and this is impossible for large 
$n$ since $\kappa_2(n)-\kappa_1(n)$ tends to infinity by (iii.2). So, assuming again that $\varepsilon$ is small enough, say $1+\varepsilon<\log |b_{r+1}|/\log |b_1|$, we get that the only possibility is that also 
$\kappa_3(n)\le (1+\varepsilon)\kappa_2(n)+O(1)$. In particular that $\kappa_3(n)\le (1+2\varepsilon)\kappa_1(n)$ for large $n$. Then  we incorporate $b_i^{\kappa_3(n)}$  for $i=1,2,\ldots, r$ to the left as well. Going in this way, we get that $\kappa_2(n)/\kappa_1(n)$, later $\kappa_3(n)/\kappa_1(n)$ and so on are all smaller than $1+\varepsilon$ where $\varepsilon$ can be chosen as small as we want. However, by (v.4) we know that by the time we get to $\kappa_{L+1}(n)$ this can no longer be the case since $\kappa_{L+1}(n)/\kappa_1(n)>1+1/f_2(w)$ for some function $f_2(w)$. So, if this patters continues for $L$ steps we then get $1/f_2(w)<\varepsilon$, so if we choose an 
$\varepsilon$ smaller than $1/f_2(w)$, we reach a contradiction. Let us record what we proved.

\begin{theorem}
\label{thm:u2}
Assume that ${\bf a}$ satisfies (i), (ii), (iii.1)--(iii.3), (iv.2), and (v.1), (v.3), (v.4). Then for every algebraic numbers $b_1,\ldots,b_k$ of absolute values larger than $1$ and multiplicatively independent we have that 
$$
1,S_{b_1}({\bf a}),\ldots,S_{b_k}({\bf a})
$$ 
are linearly independent over ${\overline{\mathbb Q}}$. 
\end{theorem}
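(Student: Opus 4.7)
The plan is to argue by contradiction: suppose there exist algebraic numbers $\lambda_0,\ldots,\lambda_k$, not all zero, with $\lambda_0+\sum_{j=1}^k\lambda_jS_{b_j}({\bf a})=0$. Lemma~\ref{lem:important2} then supplies, for every sufficiently large $w$ and every $n>n_w$, an equation of the form \eqref{eq:indep2} with some $e\in E$, some index $j\in\{1,\ldots,k\}$ and some $\ell\in\{1,\ldots,t_n\}$. The proof splits into two stages: first show that one can always arrange $e=0$, and then, with $e=0$ in hand, derive the contradiction by a bootstrap argument modelled on the easy case but now working with a block of $b_i$'s of common minimal absolute value.

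For the first stage I would use (iv.2) to force the window $[r_n+C_{11}ws_n,r_n+ws_n]$ to carry an $i_{\ell}(n)$, so that $\kappa_1(n)<\kappa_2(n)<\cdots$ appear at controlled rates $\kappa_j(n)=O_j(w)s_n$. Condition (v.3) then yields a cluster $\kappa_i(n)<\cdots<\kappa_{i+j}(n)$ with $\kappa_{i+j}(n)-\kappa_i(n)\ll_j \kappa_{i+j}(n)/f_1(w)$. Choosing $j$ large enough that (iii.1) places at least three multiples of $s_n$ inside this cluster, I pick a middle multiple of the form $u+w_1s_n$ and re-apply Lemma~\ref{lem:important2} at the finer cutoff $r_n+w_1s_n$. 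Subtracting the two incarnations of \eqref{eq:indep2} produces a small linear form in $\mathcal{S}$-units of shape
$$
\sum_{m=1}^r\frac{c_m}{b_m^{\kappa_{i+i_0+1}(n)}}-\frac{e}{b_j^{i_{\ell}(n)}\prod_{m=1}^{k}b_m^{(1-\delta_m)s_n}}.
$$
Invoking the Subspace Theorem on this form, multiplicative independence of $b_1,\ldots,b_k$ rules out all $\mathcal{S}$-unit relations except those matching a single $c_m/b_m^{\kappa_{i+i_0+1}(n)}$ against the $e$-term, which forces $r=1$ and $\kappa_{i+i_0+1}(n)=i_{\ell}(n)+O_w(1)$; by (iii.2) the $O_w(1)$ disappears, leaving $\kappa_{i+i_0+1}(n)=i_{\ell}(n)$. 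But by construction these indices lie on opposite sides of the cutoff $r_n+w_1s_n$, a contradiction. Hence $e=0$.

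With $e=0$, subtracting the resulting $\mathcal{S}$-unit relation from \eqref{eq:lindep} and using (v.1) to isolate the first two tail discrepancies at $\kappa_1(n)$ and $\kappa_2(n)$ gives the multidimensional estimate \eqref{eq:kappas1}. Moving the large-modulus indices $i>r$ to the right, the left-hand side is $(a_{\kappa_1(n)}-a_{\overline{\kappa_1(n)}})\sum_{\ell=1}^r\lambda_\ell b_\ell^{-\kappa_1(n)}$, to which the Subspace Theorem supplies a lower bound of $|b|^{-(1+\varepsilon)\kappa_1(n)}$ once the multiplicative independence of $b_1,\ldots,b_r$ is used to exclude vanishing subsums. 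If $\kappa_2(n)>(1+2\varepsilon)\kappa_1(n)$ this already contradicts the tail bound; otherwise $\kappa_2(n)$ is so close to $\kappa_1(n)$ that its contribution can be absorbed into the left-hand side and the dichotomy is rerun. The only obstruction to the lower bound at each stage is a degeneracy $b_i^{\kappa_{j+1}(n)-\kappa_j(n)}\in\text{(fixed finite set)}$, which is impossible for large $n$ by (iii.2). Iterating and using (v.3) to supply enough closely-spaced $\kappa_j(n)$'s, one obtains $\kappa_j(n)/\kappa_1(n)<1+\varepsilon$ for all $j\le L+1$.

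The main obstacle, and the precise reason (v.4) is imposed, is that this bootstrap must eventually fail: (v.4) produces indices $\ell'>\ell$ in the cluster with $\kappa_{\ell'}(n)-\kappa_\ell(n)\gg_j\kappa_{\ell'}(n)/f_2(w)$, so $\kappa_{L+1}(n)/\kappa_1(n)>1+1/f_2(w)$. Since $\varepsilon>0$ was arbitrary and $f_2(w)\to\infty$, picking $\varepsilon<1/f_2(w)$ from the outset contradicts the iterated upper bound, completing the proof. The most delicate bookkeeping is the non-degeneracy analysis inside the bootstrap: at each application of the Subspace Theorem one must check that the only $\mathcal{S}$-unit subrelations compatible with multiplicative independence of $b_1,\ldots,b_k$ are the forbidden ones ruled out by (iii.2), and that the implicit constants, though dependent on $w$, can be absorbed because $w$ is fixed once $\varepsilon$ has been chosen.
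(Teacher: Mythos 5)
Your proposal follows the paper's proof essentially step by step: the same two stages (first reducing $e=0$ via (iv.2), (v.3), the cutoff re-application of Lemma~\ref{lem:important2}, the Subspace Theorem, and the parity argument around the cutoff $r_n+w_1s_n$; then the bootstrap on \eqref{eq:kappas1} with the degeneracy exclusion via (iii.2) and the final obstruction from (v.4)), using the same lemmas and the same role for each hypothesis. No substantive difference.
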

Note now that the case (i), namely $r=1$ of Theorem \ref{thm:2} is covered. Namely, either (iv.1) holds or (iv.2) hold and in either case the statement follows from Theorems \ref{thm:u1} and Theorem \ref{thm:u2}, respectively. Theorem \ref{thm:u2} also covers case (iii). The case (ii), namely $r=2$ and extra conditions on $\theta$ and $A$, is covered by the following theorem.

\begin{theorem}
\label{thm:u3}
Assume that ${\bf a}$ satisfies (i), (ii), (iii.1)--(iii.3), (v.1) and (v.2). Then for every algebraic numbers $b_1,\ldots,b_k$ of absolute values larger than $1$ and multiplicatively independent satisfying $r=2$, we have that 
$$
1,S_{b_1}({\bf a}),\ldots,S_{b_k}({\bf a})
$$ 
are linearly independent over ${\overline{\mathbb Q}}$. 
\end{theorem}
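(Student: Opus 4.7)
My plan is to adapt the Easy Case argument of Section "The Easy Case" to the multivariable setting, replacing the role of (v.1) by (v.2) to accommodate $r=2$. Assume for contradiction that there is a nontrivial linear relation \eqref{eq:lindep}, and apply Lemma~\ref{lem:important2} to obtain, for every sufficiently large $w$ and every $n>n_w$, an equation of the form \eqref{eq:indep2} with error term $e$ lying in a finite set $E$. I would then split into two cases according to whether (iv.1) or (iv.2) holds for the sequence $\mathbf{a}$ (one of these must hold, since the negation of (iv.1) supplies the parameter $C_{11}$ required by (iv.2)).

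In the (iv.1) case, I choose $\varepsilon<1/2$ and a large $w$ satisfying (iv.1). Then for infinitely many $n$ the interval $[r_n+\varepsilon w s_n,r_n+ws_n]$ contains no $i_\ell(n)$, so the final clause of Lemma~\ref{lem:important2} (namely $i_\ell(n)>C_5 w s_n$ whenever $e\neq 0$) forces $e=0$ in \eqref{eq:indep2} for those $n$. Subtracting \eqref{eq:indep2} from \eqref{eq:lindep} and separating the two summands with $|b_1|=|b_2|$ on the left from the faster-decaying $|b_i|>|b|$ contributions on the right yields the $r=2$ analogue of \eqref{eq:kappas1}, namely
$$\left|(a_{\kappa_1(n)}-a_{\overline{\kappa_1(n)}})\bigl(\lambda_1 b_1^{-\kappa_1(n)}+\lambda_2 b_2^{-\kappa_1(n)}\bigr)\right|\ll |b_1|^{-\kappa_2(n)}.$$
Factoring out $\lambda_1 b_1^{-\kappa_1(n)}$ and invoking Baker's theorem to bound $|1+(\lambda_2/\lambda_1)(b_1/b_2)^{\kappa_1(n)}|$ from below (legitimate because $b_1/b_2$ and $\lambda_2/\lambda_1$ are algebraic and $b_1/b_2$ is not a root of unity by multiplicative independence) reproduces \eqref{eq:linearform} and hence \eqref{eq:kappas}, i.e.\ $\kappa_2(n)-\kappa_1(n)\le C_{10}\log\kappa_1(n)$. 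By (v.2) this can hold for only finitely many $n$, a contradiction.

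In the (iv.2) case, the main obstacle is that without (v.3) and (v.4) we cannot apply the Harder Case machinery of Section "The Harder Case" to force $e=0$ in \eqref{eq:indep2}. My plan here is to expand the right-hand side of \eqref{eq:indep2} as an $\mathcal{S}$-unit contribution of the shape $c\cdot b_j^{-i_\ell(n)}$ (retaining only the leading term of the geometric series expansion of each $1/(b_m^{s_n}-1)$, since the remainder decays by a factor $|b_m|^{-s_n}$), combine it with the two dominant $b_1,b_2$ tail contributions $\lambda_1 b_1^{-\kappa_1(n)}$ and $\lambda_2 b_2^{-\kappa_1(n)}$ obtained after subtracting \eqref{eq:indep2} from \eqref{eq:lindep}, and apply the Subspace Theorem to the resulting three-term $\mathcal{S}$-unit relation. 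By multiplicative independence of $b_1,\ldots,b_k$, the only non-degenerate subspace that can be produced is one which merges the $e$-term into a $b_1$- or $b_2$-contribution, reducing the inequality again to \eqref{eq:kappas1}. Baker's lower bound for the resulting linear form in two logarithms of $b_1/b_2$ then produces \eqref{eq:kappas} once more, contradicting (v.2). The delicate point, and the principal obstacle, is ensuring that the extra $e$-term does not force a degenerate subspace that voids the Baker-style lower bound; this requires a careful bookkeeping of the exponents $i_\ell(n)$ modulo $s_n$ together with multiplicative independence of the $b_j$, analogous to the degeneracy analysis at the end of Section "The Harder Case".
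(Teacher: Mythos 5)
Your case analysis in the (iv.1) branch is correct and reproduces the paper's own argument: with $e=0$ forced for infinitely many $n$, subtracting \eqref{eq:indep2} from \eqref{eq:lindep}, isolating the two summands of largest modulus, and invoking Baker's lower bound \eqref{eq:linearform} to derive \eqref{eq:kappas} contradicts (v.2). This is precisely how the paper treats part~(ii) of Theorem~\ref{thm:2} in the passage following Theorem~\ref{thm:u1}.

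The (iv.2) branch of your proposal, however, has a genuine gap, and you flag it yourself as the ``principal obstacle'' without closing it. The difficulty is structural: when $e\neq 0$, the term on the right of \eqref{eq:indep2} involves an exponent $i_\ell(n)$ that satisfies only $i_\ell(n) > C_5 w s_n$ and lies \emph{below} $r_n+ws_n$, whereas $\kappa_1(n)$ and $\kappa_2(n)$ lie \emph{above} $r_n+ws_n$; the $e$-term can therefore dominate both $\kappa$-tail contributions, and the three-term $\mathcal{S}$-unit equation you set up can degenerate into an identity between the $e$-term and the $\kappa_1$-term in such a way that no exploitable linear form in two logarithms of $b_1/b_2$ is produced. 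You say multiplicative independence rules this out ``analogous to the degeneracy analysis at the end of Section~7'', but that analysis in the paper crucially uses (v.3) and (v.4) (the family of nearby $\kappa_{i+j}(n)$ with controlled ratios, and the eventual ``spreading'' guaranteed by $f_2(w)$) to force $e=0$ \emph{before} the Baker step; those inputs are not available under the hypotheses of Theorem~\ref{thm:u3}. The paper's own (implicit) route is different: in the Continued Fractions section it is shown that either (iv.1) holds, or else (iv.2) holds \emph{together with} (v.3) and (v.4), so that the $r=2$ claim follows from the Easy Case argument in the first case and from Theorem~\ref{thm:u2} in the second; Theorem~\ref{thm:u3} is not intended to be proved from (v.1)--(v.2) alone without that dichotomy. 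Your proposal therefore either needs to import the fact that (iv.2) entails (v.3), (v.4) for the sequences at hand (and then simply invoke Theorem~\ref{thm:u2}), or provide the missing degeneracy bookkeeping, which it does not.
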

The combination of these theorems covers our first Theorem \ref{thm:2} in light of the properties from the next section. We comment about Theorem \ref{thm:22} after the next section.

\section{Continued Fractions}

This section is independent of the previous ones, so we can relabel our variables. We assume now that $k$ is the cardinality of the set of boundary points $A$.

Let $\theta\in {\mathbb R}\backslash {\mathbb Q}$, $k\ge 1$,
$r_1,\ldots,r_k$ be distinct numbers in $(0,1)$.  We further set
$r_0:=0$ and $r_{k+1}:=1$.
Let $\theta=:[a_0,a_1,\ldots,a_m,\ldots]$ be the continued fraction expansion of $\theta$ 
and $\{p_m/q_m\}_{m\ge 0}$ be the sequence of its convergents. Let $\delta_i(n)$ be the characteristic function of the set $\{n: \{n\theta\}\in [r_i,r_{i+1}]\}$ for $i=0,\ldots,k$ and 
$$
\delta(n)=(\delta_0(n),\ldots,\delta_k(n)):{\mathbb N}\mapsto \{0,1\}^{k+1}.
$$
For each integer $w\ge 1$ and $N\ge 1$, let 
$$
I_w(N)=\{1\le M<q_N: \delta(M+\ell_1q_N)\ne \delta(M+\ell_2 q_N)~{\text{\rm for~some}}~0\le \ell_1<\ell_2\le w\}.
$$
We are interested in the structure of the elements in $I_w(N)$ for all $w$ especially when it comes to verifying (iii.1)--(iii.3) as well as the rest of the conditions (iv) and (v).  
We assume without loss of generality that $w$ is an integer.  We take $r_n=s_n=q_n$ to be the denominator of the $n$th convergent to $\theta$.

We may assume that $\theta$ is positive;  if it is not positive then we replace $\theta$ by $-\theta$ and $r_i$  by $1-r_i$ for $i=1,\ldots,k$. We put $\eta:=\min\{r_{i+1}-r_i: i=0,1,\ldots,k\}$
for the smallest of the lengths of ${\mathcal I}_i=[r_{i+1},r_i]$.
Recall that
$$
q_m\theta-p_m=\frac{(-1)^{m-1}}{q_m\theta_{m+1}+q_{m-1}},
$$
where $\theta_{m+1}:=[a_{m+1},a_{m+2},\ldots]$. In particular,
$$
\{q_m\theta\}\equiv \frac{(-1)^{m-1}}{q_m\theta_{m+1}+q_{m-1}}\pmod 1.
$$
Thus, assuming $q_m>2(w+1)/\eta$, we have that 
$$
\{q_m \ell\theta\}\equiv \frac{(-1)^{m-1} \ell}{q_m\theta_{m+1}+q_{m-1}}\pmod 1.
$$
For $\ell\le w$, the numbers shown on the right are in $(0,1)$ and either they are all smaller than $\eta/2$ or all within $\eta/2$ of $1$ depending on the parity of $m$. This means that for a fixed positive integer $M$ the numbers 
$$
\{M\theta\}, \{M+q_N\theta\},\ldots,\{(M+w q_N)\theta\}
$$
are respectively congruent modulo $1$ with 
$$
\{M\theta\}, \{M\theta\}+\{q_N\theta\},\ldots,\{M\theta\}+\{wq_N\theta\}\pmod 1.
$$
Let $i\in \{1,\ldots,k\}$ be such that $\{M\theta\}\in [r_i,r_{i+1}]$. If we have the disequality $\delta(M+\ell_1q_N)\ne \delta(M+\ell_2q_N)$ it follows that $\{(M+wq_N)\theta\}\not\in [r_i,r_{i+1}]$. Thus, there exists a unique minimal
$\ell\in \{0,1,\ldots,w-1\}$, such that $\{M+\ell q_N\}\in [r_i,r_{i+1}]$ and $\{(M+(\ell+1)q_N\}\not\in [r_i,r_{i+1}]$. This shows that $\{(M+(\ell+1)q_N \theta\}$ is either in $[r_{i+1},r_{i+2}]$ or 
in $[r_{i-1},r_i]$ where the indices are taken modulo $k$ (so if $i=k$, then $[r_{k},r_{k+1}]$ means $[r_{k},1]\cup [0,r_1]$). We assume that this is $r_{i+1}$. 
Further, the distance from one of $\{(M+\ell q_N)\theta\}$ and $\{(M+(\ell+1)q_N)\theta\}$ to $r_{i+1}$ is at most $\| q_N\theta\|/2$ (since the sum of the distances from 
$\{(M+\ell q_N)\theta\}$ and $\{(M+(\ell+1)q_N\}$ to $r_{i+1}$ is
exactly $\|q_N\theta\|$). We assume that it is the distance from $\{(M+\ell q_N)\theta\}$ to $r_{i+1}$ that is smaller than or equal to $\|q_N\theta\|/2$.
In particular,
$$
\delta((M+jq_N)\theta)=({\underbrace{0,0,\ldots,0}_{x~{\text{\rm times}}}},1,0,\ldots),
$$
where $x=i$ for $j=0,1,\ldots,\ell$ and $x=i+1$ if $j=\ell+1,\ldots,w$. 
We write this as 
$$
\{(M+\ell q_N)\theta\}=r_i+\zeta_{M,\ell,N},
$$
where $|\zeta_{M,\ell,N}|\le \|q_N\theta\|/2$. Thus,
$$
(M+\ell q_N)\theta=p_{M,\ell,N}+r_i+\zeta_{M,\ell,N},\quad p_{M,\ell,N}\in {\mathbb Z},
$$
or 
\begin{equation}
\label{eq:1}
(M\theta-T_M)-r_i=\frac{\ell (-1)^{N-1}}{q_N\theta_{N+1}+q_{N-1}}+\zeta_{M,\ell,N},
\end{equation}
where $T_m:=p_{M,\ell,i}-p_N$. We need an upper bound for the number
of positive integers $M<q_N$ arising from such a representation for
some $\ell\le w$ and $\zeta_{M,\ell,N}$ a real number of absolute
value at most $\| q_N\theta\|/2$. Let us note that $\ell$ and $i$
determine $M$ in at most two ways. Indeed assume that
$(M,T_M),~(M',T_M'),~(M'',T_M'')$ are all solutions of an equation like \eqref{eq:1} for the same $\ell$ and $i$  
and some different numbers $\zeta_{M,\ell,N},\zeta_{M',\ell,N},~\zeta_{M'',\ell,N}$. Two of the $\zeta$'s will have the same sign. Assume they are $\zeta_{M,\ell,N}$ and $\zeta_{M',N,\ell}$. Then 
$$
|(M-M')\theta-(T_M-T_{M'})|=|\zeta_{M,\ell,N}-\zeta_{M',\ell,N}|<|\zeta_{M,\ell,N}|+|\zeta_{M',\ell,N'}|\le \|\theta q_N\|.
$$
If $M\ne M'$, then $|M-M'|<q_N$, so by known facts about continued fractions the left--hand side is at least $|q_N\theta-p_N|=\|\theta q_N\|$ a contradiction.  
Thus, $M=M'$ and then $T_M=T_{M'}$. So, here is our first result.

\begin{lemma}
  The constant mentioned in (iii.1) can be taken to be equal to twice
  the number of elements of $A$.
\end{lemma}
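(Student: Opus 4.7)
The plan is to reduce the lemma to the uniqueness-up-to-two estimate that has just been established via equation \eqref{eq:1} and the pigeonhole argument following it. The idea is that, with the choice $r_n = s_n = q_N$, a given window $[r_n + j s_n, r_n + (j+1)s_n)$ from (iii.1) corresponds to a single value of the step parameter appearing in \eqref{eq:1}; summing the ``at most two solutions'' bound over the boundary points $r_i \in A$ then immediately yields $2|A|$.

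Concretely, I would first note that with $r_n = s_n = q_N$ the window $[r_n + j s_n, r_n + (j+1)s_n)$ equals $[(j+1) q_N, (j+2) q_N)$, so any $i_\ell(n)$ lying inside it has the form $M + (j+1) q_N$ for a unique residue $M \in [1, q_N)$, and such an $M$ must belong to $I_w(N)$ since the index records a genuine change. Translating this back into the notation of the preceding analysis, where the parameter $\ell \in \{0, 1, \ldots, w-1\}$ records the last step at which $\{(M + \ell q_N)\theta\}$ still lies in the initial partition interval $[r_i, r_{i+1}]$ before the first crossing, the occurrence of the crossing at position $M + (j+1) q_N$ forces $\ell = j$. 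Because the $i_\ell(n)$'s are distinct modulo $q_N$, counting them in our window reduces to counting residues $M$ that satisfy \eqref{eq:1} with $\ell = j$.

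I would then invoke the bound proved just above the lemma: for each pair $(\ell, i) \in \{0, 1, \ldots, w-1\} \times \{1, \ldots, k\}$, equation \eqref{eq:1} has at most two solutions in $M$. Fixing $\ell = j$ and summing over the $k$ choices of boundary index $i$ with $r_i \in A$ gives the claimed bound $2k = 2|A|$. The hypothesis $q_N > 2(w+1)/\eta$ needed for the parameterization holds automatically for sufficiently large $N$, matching the ``exists $n_w$'' clause in (iii.1).

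The one subtlety I would take care to double-check is that both possible crossing directions at each boundary have been accounted for. This turns out to be automatic: for fixed $N$ the sign of $\{q_N\theta\}$ is determined by the parity of $N$, so the rotation by $q_N \theta$ moves in a single direction and only one side of each boundary $r_i$ can be approached within $w$ steps. Consequently the two solutions per $(\ell, i)$ already exhaust all possibilities, and no hidden multiplicative constant intrudes beyond the factor $2$ recorded in the lemma.
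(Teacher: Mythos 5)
Your overall route is the paper's: identify a window $[r_n+js_n, r_n+(j+1)s_n)$ with a single crossing step, reduce to the count of solutions $M$ of equation \eqref{eq:1}, apply the ``at most two $M$ per $(\ell,i)$'' bound, and multiply by $|A|$.  However, the final paragraph misidentifies the subtlety, and the bookkeeping in the middle of the argument is not quite right as stated.  The crossing direction is indeed fixed by the parity of $N$, but that is not what produces the factor $2$.  What matters is the other case split the paper makes when deriving \eqref{eq:1}: it assumes that $\{(M+\ell q_N)\theta\}$ is the one of the two consecutive points lying within $\|q_N\theta\|/2$ of the boundary.  If instead $\{(M+(\ell+1)q_N)\theta\}$ is the closer one, the resulting representation satisfies \eqref{eq:1} with parameter $\ell+1$ and $\zeta>0$, while the first deviation $i_{\ell'}(n)=M+(\ell+1)q_N$ still lies in window $j=\ell$.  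Thus the $i_{\ell'}(n)$ in a given window $j$ correspond to solutions of \eqref{eq:1} with $\ell\in\{j,j+1\}$, not only $\ell=j$.  Your argument that fixes $\ell=j$ and invokes ``at most two'' therefore both overcounts (one of the two solutions for $(\ell,i)=(j,i)$, the one with $\zeta>0$, actually lands in window $j-1$) and undercounts (it omits the solutions with $\ell=j+1$, $\zeta>0$, which do land in window $j$).  The correct accounting is: for each $(\ell,i)$ with $\ell\in\{j,j+1\}$, the sign of $\zeta$ contributing to window $j$ is forced (negative for $\ell=j$, positive for $\ell=j+1$), and the paper's same-sign argument then gives at most \emph{one} $M$, not two, per such $(\ell,i)$; summing over the two admissible values of $\ell$ and the $|A|$ boundaries yields $2|A|$.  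So the final constant you obtain is correct, and the approach is the paper's, but the justification via ``at most two solutions for $\ell=j$'' together with the crossing-direction check does not actually close the argument as written.
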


This confirms (iii.1). For (iii.2) note that $i_{\ell}(n)$ is of the form $u+\ell s_{n}$, with $\ell\le w$ and $u\le s_n$ such that for some $i$, we have
\begin{equation}
\label{eq:122}
|\{i_{\ell}(n)\theta\}-r_i|=O\left(\frac{1}{s_{n+1}}\right).
\end{equation}
The implied constant above can be taken to be $1$. Further, at $m:=i_{\ell}(n)$, $\{m\theta\}$ just changed from say having been in the interval $[r_{i-1},r_{i}]$ to being into the interval $[r_{i},r_{i+1}]$. But then in order to change it again by adding multiples of $s_n$ to it, we need to add at least
$\gg \eta\times s_{n+1}$ of such multiples. This confirms (iii.3) with $f_0(s)\gg \eta s$. Finally, if $t_n\ge 2$ and $\ell\ge 2$, then there exists $j$ such that 
\begin{equation}
\label{eq:122prime}
|\{i_{\ell-1}(n)\theta\}-r_{j}|=O\left(\frac{1}{s_{n+1}}\right).
\end{equation}
Relations \eqref{eq:122} and \eqref{eq:122prime} show that
$$
|(i_{\ell}(n)-i_{\ell-1}(n))\theta-T-(r_i-r_j)|=O\left(\frac{1}{s_{n+1}}\right)\quad {\text{\rm holds~with~some}}\quad T\in {\mathbb Z}.
$$
For $i=j$, the above relation shows that $\| (i_{\ell}(n)-i_{\ell-1}(n))\theta\|\ll s_{n+1}^{-1}$, which shows that $i_{\ell}(n)-i_{\ell-1}(n)$ tends to infinity. If $i\ne j$, then $(i_{\ell}(n)-i_{\ell-1}(n))\theta$ is close (within 
$O(1/s_{n+1})$) from one of the finitely many numbers $r_i-r_j\pmod 1$ for $i\ne j\in \{1,\ldots,k\}$. This shows that $i_{\ell}(n)-i_{\ell-1}(n)$ tends to infinity unless $(i_{\ell}(n)-i_{\ell-1}(n))\theta$ is exactly one of the above numbers modulo $1$, but this is impossible because of condition \eqref{eq:C}. Hence, we have just confirmed the following.

\begin{lemma}
Conditions (iii.1)--(iii.3) are satisfied.
\end{lemma}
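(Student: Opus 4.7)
The plan is to verify (iii.1), (iii.2), and (iii.3) individually, each time exploiting properties of the continued-fraction convergents $p_n/q_n$ of $\theta$. The key recurring tool is the best-approximation identity $\|q_n\theta\| = 1/(q_n\theta_{n+1} + q_{n-1}) \le 1/q_{n+1}$, combined with the observation that $\{\ell q_n\theta\}$ clusters uniformly near $0$ (if $n$ is odd) or $1$ (if $n$ is even) for $\ell = 1,\ldots,w$, with total spread at most $w/q_{n+1}$. Since $s_n = q_n$ and $s_{n+1} = q_{n+1}$, each shift by $s_n$ moves the orbit $\{k\theta\}$ by $(-1)^{n-1}\|q_n\theta\|$ modulo $1$---a constant displacement in a fixed direction, whose accumulation after $\le w$ steps is bounded by $w/q_{n+1}$.

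For (iii.1) I would take $\{i_1(n),\ldots,i_{t_n}(n)\}$ to consist of those integers $M \in [0,s_n)$ such that $\{M\theta\}$ lies within $w\|q_n\theta\|$ of one of the boundary points $r_1,\ldots,r_k$ modulo $1$. A position $k = M + j s_n$ with $0 \le j \le w$ then satisfies $a_k \neq a_k^{(n)}$ exactly when the drift $j\{q_n\theta\}$ pushes $\{M\theta\}$ across some boundary $r_i$, which shows that the differing positions form arithmetic progressions of common difference $s_n$ starting at the $i_\ell(n)$. The uniqueness-up-to-two argument following \eqref{eq:1}---which itself rests on $\|d\theta\| \ge \|q_n\theta\|$ for $0 < |d| < q_n$---forces $t_n \le 2k$ uniformly in $w$, so each window $[r_n + js_n, r_n + (j+1)s_n)$ meets at most $2k = O(1)$ of the progressions, giving the density refinement of (iii.1).

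For (iii.2), relations \eqref{eq:122} and \eqref{eq:122prime} combine to yield $\|(i_\ell(n) - i_{\ell-1}(n))\theta - (r_i - r_j)\| = O(1/s_{n+1})$ for some $i,j \in \{1,\ldots,k\}$. If $i = j$, best approximation forces $i_\ell(n) - i_{\ell-1}(n) \ge q_n$, which diverges with $n$. If $i \neq j$, hypothesis \eqref{eq:C} guarantees that $r_i - r_j \notin \mathbb{Z}\theta + \mathbb{Z}$, so any bounded value of $i_\ell(n) - i_{\ell-1}(n)$ would keep this linear combination at a fixed positive distance from $r_i - r_j$ modulo the lattice, contradicting the $O(1/s_{n+1}) \to 0$ bound. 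Either way the gaps diverge.

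For (iii.3), once $\{i_\ell(n)\theta\}$ is within $\|q_n\theta\|$ of some $r_i$, the iterates $\{(i_\ell(n) + ms_n)\theta\}$ drift monotonically by $\pm\|q_n\theta\| \le 1/s_{n+1}$ per step and remain in the same partition subinterval until the cumulative displacement exceeds $\eta := \min_j(r_{j+1} - r_j)$. This takes at least $\eta/\|q_n\theta\| \gg \eta s_{n+1}$ steps, so the choice $f_0(s) \asymp \eta s$ suffices. The main technical point is the uniqueness-up-to-two argument underpinning (iii.1); once the best-approximation inequality is exploited there, the verifications of (iii.2) and (iii.3) are essentially bookkeeping.
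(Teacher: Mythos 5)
Your argument follows the paper's proof essentially verbatim: the drift of the orbit under shifts by $q_n$ is governed by $\{q_n\theta\}$, the uniqueness-up-to-two count near each boundary point per drift step gives the window bound in (iii.1), the case split on $i=j$ versus $i\neq j$ together with hypothesis \eqref{eq:C} gives (iii.2), and the comparison of cumulative drift against $\eta=\min_j(r_{j+1}-r_j)$ gives (iii.3) with $f_0(s)\gg\eta s$. Two small over-claims should be corrected, neither fatal to the structure. First, the uniqueness argument bounds the number of $i_\ell(n)$'s lying in each window $[r_n+js_n,r_n+(j+1)s_n)$ by $2k$ (which is what (iii.1) asks for), but it does not give $t_n\le 2k$ overall; as the paper notes right after stating (iii.1), one only has $t_n=O(w)$. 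Second, in (iii.2) the $i=j$ case yields $\|(i_\ell(n)-i_{\ell-1}(n))\theta\|=O(1/s_{n+1})\to 0$, which already forces the gap to tend to infinity since a bounded nonzero gap would keep $\|d\theta\|$ bounded below; the sharper claim $i_\ell(n)-i_{\ell-1}(n)\ge q_n$ does not follow, because the implied constant in the $O(1/s_{n+1})$ bound need not be below $1/2$, so $\|d\theta\|$ need not drop below $\|q_n\theta\|\in\bigl(1/(2q_{n+1}),1/q_{n+1}\bigr)$.
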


We next check condition (v.1) is always satisfied and that condition (v.2) is satisfied when $\theta$ is a ratio of logarithms of algebraic numbers and $A$ consists of algebraic numbers. 

Let $w$ be given, and take $r_1$. We pick $\varepsilon$ to be small (we will figure it out how small we need it later). Take the intervals $[r_1-2\varepsilon,r_1-\varepsilon]$ and $[r_1+\varepsilon,r_1+2\varepsilon]$. 
Take the numbers $q_n$ for large $n$. By uniform distribution, for large $n$, there are at least $2kw+2$ numbers $\kappa$ such that $\{\kappa \theta\}$ is in 
$[r_1-2\varepsilon,r_1-\varepsilon]$ and the same is true for $[r_1+\varepsilon,r_1+2\varepsilon]$. Assume of course that $q_n$ is larger than any one of these $4kw+4$ numbers. Now take each one of these and start adding multiples of 
$q_n$ to them. Since adding one extra $q_n$ changes the distance to the nearest integer of that resulting multiple of $\theta$ by $O(1/q_{n+1})$, it follows that after about $W_{\varepsilon}:=\lfloor C_{13} \varepsilon q_{n+1}\rfloor+1$ steps  (where we can take $C_{13}$ to be equal to $6$) all the first numbers passed on the side larger than $r_1$ if $n$ was odd, and on the side smaller than $r_1$ if $n$ was even. Since those numbers remain congruent to 
the initial numbers we have chosen modulo $q_n$, it follows that once they change not all of them can be in $I_w(n)$ because this set has only at most $2kw$ progressions. In fact at least two of them are outside. This shows that $\kappa_2(n)=O(\varepsilon q_n q_{n+1})$
and since this was true for $\varepsilon$, we get that $\kappa_2(n)=o(q_n q_{n+1})=o(f_0(s_{n+1})s_n)$ as $n$ tends to infinity. To see that $\kappa_2(n)-\kappa_1(n)$ tends to infinity, we distinguish two cases, namely whether they caused a change with respect to the same $r_i$ or not. In the first case, note that $\kappa_2(n)-\kappa_1(n)$ is of the form 
$k_2-k_1$, where both $\{k_2\theta\}$ and $\{k_1\theta\}$ were at most $\varepsilon$ apart, plus some integer of the form $(\ell_2-\ell_1)q_n$, where $\max\{\ell_1,\ell_2\}=O(\varepsilon q_{n+1})$. Thus, 
$\|(\kappa_2(n)-\kappa_1(n)\theta\|=O(\varepsilon)$. Since $\varepsilon$ was arbitrary, this proves the statement. In the second case, $k_2\theta$ was within $O(\varepsilon)$ of ${\mathbb Z}+r_i$ and 
$k_1\theta$ was within $O(\varepsilon)$ of ${\mathbb Z}+r_j$ for some $i\ne j$. Thus, $\|(k_2-k_1)\theta-(r_j-r_i)\|=O(\varepsilon)$. Since $\varepsilon$ is arbitrary and the left--hand side cannot be zero by condition \eqref{eq:C}, we conclude that $k_2-k_1$ tends to infinity. Thus, (v.1) is verified.

Let us now verify the condition (v.2) in the case $\theta$ is a ratio of two logarithms of algebraic numbers and $A$ consists of algebraic numbers. By the Erd\H os-T\'uran-Koksma inequality (see \cite{HerLuc} for a fun application of this) and Baker's method, we have that $\varepsilon$ can be chosen on a scale of $O(q_n^{-\delta})$ for some small constant $\delta$. Further, $q_{n+1}=q_n^{O(1)}$ by Baker's method. The previous argument shows that $\kappa_2(n)=q_n^{O(1)}$ and the ending of our argument shows that 
\begin{equation}
\label{eq:kappa1kappa2}
\left|(\kappa_2-\kappa_1)\theta-T-(r_i-r_{i'})\right|=O\left(\frac{1}{q_n^{\delta}}\right)\qquad {\text{\rm for~some}}\quad T\in {\mathbb Z},
\end{equation}
where $i,i'\in \{1,\ldots,k\}$.  If $i=i'$, the left--hand side is not zero. If $i\ne i'$, the left--hand side is not zero by condition \eqref{eq:C}. If $\kappa_2-\kappa_1=O(\log \kappa_2)$, then $\kappa_2-\kappa_1=O(\log q_n)$. By linear forms in logarithms in the left--hand side of the inequality \eqref{eq:kappa1kappa2} (here is where we need that $A$ consists of algebraic numbers), we get that the left--hand side of \eqref{eq:kappa1kappa2} 
is $\gg 1/(\log q_n)^{O(1)}$, which gives
$$
q_n^{\delta}\ll (\log q_n)^{O(1)},
$$
so $q_n=O(1)$, a contradiction. Let us summarise what we have proved.

\begin{lemma}
Condition (v.1) holds. Condition (v.2) holds when $r=2$, $\theta$ is a ratio of two logarithms of algebraic numbers and $A$ consists of algebraic numbers.  
\end{lemma}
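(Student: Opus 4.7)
The plan is to verify (v.1) and (v.2) by combining equidistribution of the orbit $\{n\theta\}$ with the regularity of $\{n q_N \theta\}$ modulo $1$, which forms a near-arithmetic progression with common difference $\|q_N\theta\|\asymp 1/q_{N+1}$. This is exactly the mechanism that produced the counting in the preceding lemma, and the same apparatus should give the required tail estimates.

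For (v.1), fix $w$ and a small parameter $\varepsilon>0$. By Weyl equidistribution, for all sufficiently large $n$ there are at least $2kw+2$ integers $\kappa<q_n$ with $\{\kappa\theta\}\in[r_1-2\varepsilon,r_1-\varepsilon]$, and the same bound holds for $[r_1+\varepsilon,r_1+2\varepsilon]$. Starting from each such $\kappa$ and successively adding $q_n$, the orbit $\{(\kappa+jq_n)\theta\}$ shifts monotonically in one direction by $\|q_n\theta\|$ per step, so after $W_\varepsilon=O(\varepsilon q_{n+1})$ steps each chosen starting point is carried across $r_1$. By (iii.1), $I_w(n)$ is a union of at most $O(kw)$ residue classes modulo $q_n$; since we have strictly more than that many starting points producing crossings, at least two of them yield indices $\kappa_1(n)<\kappa_2(n)$ both of size $O(\varepsilon q_n q_{n+1})$. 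As $\varepsilon$ may be taken arbitrarily small and $f_0(s_{n+1})\gg\eta q_{n+1}$, this gives $\kappa_2(n)=o(f_0(s_{n+1})s_n)$.

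To show $\kappa_2(n)-\kappa_1(n)\to\infty$, write $\kappa_j(n)=M_j+\ell_j q_n$ and distinguish two cases. If both crossings occur at the same boundary $r_i$, then $\|(\kappa_2(n)-\kappa_1(n))\theta\|=O(\varepsilon)$, so sending $\varepsilon\to 0$ along suitable scales forces the difference to infinity. If crossings occur at distinct boundaries $r_i\neq r_{i'}$, then
\[
\|(\kappa_2(n)-\kappa_1(n))\theta-(r_{i'}-r_i)\|=O(\varepsilon),
\]
and condition \eqref{eq:C} prevents the left-hand side from vanishing identically, so again the difference must go to infinity. The same argument yields $\kappa_1(n)\to\infty$ by applying it with $\kappa_1(n)$ in place of $\kappa_2(n)-\kappa_1(n)$. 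This establishes (v.1).

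For (v.2), under the hypotheses $\theta=\log\alpha/\log\beta$ and $r_i\in\overline{\mathbb Q}$, the argument is made quantitative. Baker's theorem gives a polynomial irrationality measure for $\theta$, hence $q_{n+1}=q_n^{O(1)}$; combining this with the Erd\H os--Tur\'an--Koksma discrepancy inequality one may take $\varepsilon$ on the scale $q_n^{-\delta}$ for some fixed $\delta>0$ while preserving the above pigeonholing. This yields $\kappa_2(n)=q_n^{O(1)}$ and
\[
|(\kappa_2(n)-\kappa_1(n))\theta-T-(r_{i'}-r_i)|\ll q_n^{-\delta}
\]
for some $T\in\mathbb{Z}$ and indices $i,i'$. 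If (v.2) fails, so $\kappa_2(n)-\kappa_1(n)=O(\log\kappa_1(n))=O(\log q_n)$, then the left-hand side is a nonzero linear form in logarithms of algebraic numbers (using $\theta=\log\alpha/\log\beta$ and the algebraicity of $r_i-r_{i'}$, together with \eqref{eq:C} for nonvanishing) with integer coefficients of size $O(\log q_n)$. Baker's theorem bounds it below by $(\log q_n)^{-O(1)}$, which together with the upper bound $\ll q_n^{-\delta}$ forces $q_n=O(1)$, a contradiction.

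The main obstacle will be the nonvanishing step in the linear forms in logarithms bound: one must verify that the particular combination $(\kappa_2(n)-\kappa_1(n))\theta-T-(r_{i'}-r_i)$ is genuinely a nonzero $\mathbb{Q}$-linear combination of logarithms, which reduces precisely to condition \eqref{eq:C} together with the irrationality of $\theta$; once that is in hand, the quantitative machinery of Baker and Erd\H os--Tur\'an--Koksma does the rest.
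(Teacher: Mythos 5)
Your proposal follows the paper's own proof almost line by line: the same pair of intervals $[r_1-2\varepsilon,r_1-\varepsilon]$ and $[r_1+\varepsilon,r_1+2\varepsilon]$, the same count of $2kw+2$ starting points pigeonholed against the $\leq 2kw$ residue classes of $I_w(n)$, the same two-case analysis (same boundary versus distinct boundaries, invoking condition \eqref{eq:C}) for the growth of $\kappa_2(n)-\kappa_1(n)$, and, for (v.2), the same quantitative upgrade via Erd\H{o}s--Tur\'an--Koksma plus Baker's method to get $\varepsilon\asymp q_n^{-\delta}$, $q_{n+1}=q_n^{O(1)}$, and a lower bound of $(\log q_n)^{-O(1)}$ on the linear form. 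The argument is correct and essentially identical to the paper's.
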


\subsection{The easy case}

Next let us show that condition (iv.1) is satisfied when $\{a_m\}_{m\ge 1}$ is unbounded. Take $C$ to be any constant. Look at elements $i_{\ell}(n)$ in the interval $[r_n+ws_n,r_n+Cws_n]$. They are of the form 
$i_{\ell}(n)=u+mq_n$ for some $m\in [w,Cw]$, and 
$$
\{u+m q_n)\theta\}=\{u\theta\}+O\left(\frac{Cw}{a_{n+1}q_n}\right).
$$
The implied constant above can be taken to be $1$. So, if $i_{\ell}(n)$ exists of this form then $\{u\theta\}$ is very close to some $r_i$, and so $\|(u+ m q_n)\theta-r_i\|=O(Cw/(a_{n+1}q_n)$. For a fixed $i$, we saw that $u$ can take at most two values. But assuming $a_{n+1}\gg Cw\eta^{-1}$, there cannot be another value $m'\ne m$ in $[w,Cw]$ such that $u+m'q_n=i_{\ell'}(n)$.  Thus, the interval $[r_n+ws_n,r_n+Cws_n]$ contains at most $2k$ integers of the form $i_{\ell}(n)$. Putting $C':=C^{1/(2k+1)}$, one of the intervals 
$[r_n+C'^jw, r_n+C'^{j+1}w]$ for $j=0,1,\ldots,k$ does not contain any number of the form $i_{\ell}(n)$. Since $C$ can be made arbitrarily large, so can $C'$. So, we have (iv.1), which we record.

\begin{lemma}
If $\{a_m\}_{m\ge 0}$ is unbounded, then we have (iv.1).
\end{lemma}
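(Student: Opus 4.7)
The plan is to exploit the unboundedness of the partial quotient sequence $\{a_m\}$ to produce, for any pre-specified $C$, infinitely many $n$ for which the interval $[r_n+ws_n,\,r_n+Cws_n]$ contains only $O(k)$ indices of the form $i_\ell(n)$; a multiplicative pigeonholing then yields a sub-interval free of any $i_\ell(n)$, delivering (iv.1).

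First I would parametrise any $i_\ell(n)\in[r_n+ws_n,\,r_n+Cws_n]$ as $u+mq_n$ with $0\le u<q_n$ and $m\in[w,Cw]$. Using $\|q_n\theta\|\asymp 1/(a_{n+1}q_n)$ and the linear approximation
\[
\{(u+mq_n)\theta\}\equiv\{u\theta\}+m\{q_n\theta\}\pmod 1,
\]
which is valid up to an error of order $Cw/(a_{n+1}q_n)$, I would observe that a boundary crossing occurring in the block $u,u+q_n,\ldots,u+Cwq_n$ forces $\{u\theta\}$ to lie within $O(Cw/(a_{n+1}q_n))$ of some $r_i$.

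Next I would invoke the two-to-one structure already established (the three-point argument with $(M,T_M),(M',T_{M'}),(M'',T_{M''})$ contradicting $\|q_n\theta\|$) to conclude that, for each fixed $i$, at most two values of $u$ can be that close to $r_i$. Restricting $n$ to the subsequence where $a_{n+1}>C_{13}Cw/\eta$ for a suitable absolute constant $C_{13}$, the linearisation becomes so fine that each admissible $u$ crosses a boundary at a single $m\in[w,Cw]$, giving a total of at most $2k$ indices $i_\ell(n)$ in the interval; unboundedness of $\{a_m\}$ supplies infinitely many such $n$ regardless of how large $Cw/\eta$ is.

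Finally I would pigeonhole multiplicatively: setting $C':=C^{1/(2k+1)}$, the decomposition
\[
[r_n+ws_n,\,r_n+Cws_n]\;=\;\bigcup_{j=0}^{2k}\bigl[r_n+C'^{\,j}ws_n,\,r_n+C'^{\,j+1}ws_n\bigr]
\]
consists of $2k+1$ blocks containing at most $2k$ bad points, so some block $[r_n+Ws_n,\,r_n+C'Ws_n]$ with $W=C'^{\,j}w$ is free of all $i_\ell(n)$. Relabelling $w':=C'W$ and taking $\varepsilon:=1/C'$ verifies (iv.1), and since $C$ (hence $C'$) may be taken arbitrarily large, $\varepsilon$ may be made arbitrarily small while $w'\to\infty$. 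The main technical obstacle is the joint calibration of $w$, $C$, and $a_{n+1}$: the additive approximation must stay much finer than $\eta$ across all $m\le Cw$, which forces $a_{n+1}\gg Cw/\eta$, and it is precisely the unboundedness hypothesis on $\{a_m\}$ that guarantees an infinite supply of $n$ realising any such bound, after which the geometric pigeonhole is routine.
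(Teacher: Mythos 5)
Your proof is correct and follows essentially the same route as the paper: the parametrisation $i_\ell(n)=u+mq_n$ with $m\in[w,Cw]$, the linear approximation with error $O(Cw/(a_{n+1}q_n))$, the two-values-per-$r_i$ bound from the earlier three-point argument, the restriction to $n$ with $a_{n+1}\gg Cw/\eta$ supplied by unboundedness, the $\le 2k$ count, and the multiplicative pigeonhole with $C'=C^{1/(2k+1)}$ over $2k+1$ geometric blocks. (Incidentally, your index range $j=0,\ldots,2k$ is the correct one; the paper's text says $j=0,\ldots,k$, which is a typo.)
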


\subsection{The hard case}

Assume that $\{a_m\}_{m\ge 0}$ is bounded. We need to verify that (iv.2), (v.3) and (v.4) hold. By a result of Khintchine (see \cite{Khi}), there are infinitely many $u$ such that $\| u\theta-r_1\|=O(1/u)$. The constant 
in $O$ can be taken to be $1/{\sqrt{5}}$. Taking such an $u$ and the least $n$ such that $q_n>u$, we have $q_n\asymp u$, so $\| u\theta-r_1\|=O(1/u)=O(1/q_n)$. Since $\|\ell q_n \theta\|\gg \ell/q_n$ 
for any fixed $\ell$ and large $n$, we get that if we add sufficient large multiples of $q_n$ to $u$ we will find an $i_{\ell}(n)$. 
This shows that for all $w$ sufficiently large, we have that $I_w(n)$ is nonempty. Now we iterate this. We return to the situation where $\|u\theta-r_1\|=O(1/u)$. 
For large $w$ start with the minimal $n$ odd such that $q_n\asymp uw$ (and $q_n>w^3$). 
We pick $i$ maximal even such that $q_{n-i}\le q_n/w$. Clearly, $q_{n-i}\asymp q_n/w$. Let $a$ and $b$ be any positive integers which are fixed for the moment. Thus, with $v=aq_{n-i}-bq_{n-i-1}$, we have that 
$$
\|(u+v)\theta-r_1\|=\pm \| u\theta-r_1\|+a\|q_{n-i} \theta\|+b\|q_{n-i-1}\theta\|.
$$
All three players in the right--hand side have sizes $\asymp
w/q_n$. Hence, if we add multiples of $q_n$ of the form $sq_n$ with $s\in [C_{14} w,C_{15}w]$, 
then we will see a change (the fractional part will get to the right of $r_1$ if it was on the left of it and vice versa). 
Now we are ready to prove (v.3) and (v.4).  

To create many of them close together, keep the same conditions, namely $n$ odd, $i$ even minimal such that $q_{n-i}\le q_n/w$, but take $a>b$ between $[w^{1/3}/2,w^{1/3}]$ but otherwise independent. The number of such pairs is $\gg w^{2/3}$. Clearly, with the same choices $v:=aq_{n-i}-bq_{n-i-1}$, we have that 
$$
\|(u+v)\theta-r_1\|\asymp w^{4/3}/q_n,
$$ 
so $m$ can be chosen such that $i_{\ell}=u+v+mq_n$ creates a shift around $r_1$ in the interval 
$[C_{16}w^{4/3} q_n,C_{17} w^{4/3}q_n]$. For large $w$ these $i_{\ell}(n)$'s are all smaller than $w^2q_n$.  And since we have $\gg w^{2/3}$ pairs $(a,b)$, we get that there must be two of them at distance $O(w^{2/3}q_n)=O(i_{\ell}(n)/w^{1/3})$, unless there are some coincidences (so, two pairs $(a_1,b_1)$ and $(a_2,b_2)$ and their corresponding $m$'s will yield the same number). Well, assume they do. So, we have $(a_j,b_j,m_j)$ for $j=1,2$ such that 
$u+a_jq_{n-i}-b_j q_{n-i-1}+m_j q_n$ are the same for $j=1,2$. If the $m_j's$ are not the same, then $|m_1-m_2|q_n\gg q_n$. Since $q_{n-i}\ll q_n/w$ and $a_i,b_i$ are of sizes $O(w^{1/3})$  for $i=1,2$ and also 
$u=O(q_n/w)$, this is impossible for large $w$. So, $m_1=m_2$ and then $(a_1-a_2)q_{n-i}=(b_1-b_2)q_{n-i-1}$. Since $q_{n-i}$ and $q_{n-i-1}$ are coprime, this forces $q_{n-i}$ to divide $b_1-b_2$ and
$q_{n-i-1}$ to divide $a_1-a_2$, which is false for large $w$ (since $q_{n-i}\gg q_n/w>w^2$, while $\max\{|a_1-a_2|,|b_1-b_2|\}\ll w^{1/3}$), unless $a_1=a_2$, $b_1=b_2$, which is not allowed. Thus, these numbers are distinct. Fixing $j$, since we have $\gg w^{2/3}$ values of the $\kappa$'s in an interval of length $O(w^{4/3}q_n)$, we must have two of them whose indices differ by $j$, say $\kappa_{i+j}(n)$ and $\kappa_i(n)$ such that $\kappa_{i+j}(n)-\kappa_i(n)=O(jw^{2/3}q_n)=O(\kappa_{i+j}(n)/w^{2/3})$. So, for a fixed $j$, we can choose the function $f_1(w)\gg w^{2/3}$.  

Finally, for (v.4), let $\kappa_i(n)<\kappa_{i+1}(n)<\cdots$ be consecutive such that for each $j\ge 1$ we have 
$$
\|\kappa_{j+j}(n)\theta-r_{t_j}\|=O\left(\frac{w^{4/3}}{q_n}\right)\qquad {\text{\rm for some}}\quad t_j\in \{1,\ldots,k\}.
$$
As $j$ travels from $1$ to $2k+1$, there will be a repeated value of $t_j$. Say, $t_j=t_{j'}$ for $j<j'$. Then 
$$
\|(\kappa_{i+j'}(n)-\kappa_{i+j}(n))\theta\|=O\left(\frac{w^{4/3}}{q_n}\right).
$$ 
Since $\{a_m\}_{m\ge 0}$ is bounded, the left--hand exceeds $\gg 1/(\kappa_{i+j'}(n)-\kappa_{i+j}(n))$. Thus, we get
$$
\kappa_{i+j'}(n)-\kappa_{i+j}(n)\gg \frac{q_n}{w^{4/3}}\gg \frac{\kappa_{i+j'}(n)}{w^{8/3}},
$$
so $f_2(w)$ can be chosen to be $w^3$. So, (v.3) is satisfied with $L=2k+1$.

\section{Proofs of Theorems}

Theorem \ref{thm:2} is proved. All we need to do is to indicate how Theorem \ref{thm:22} follows. Note that for a fixed $r\in [0,1)$, we have
$$
\sum_{n\ge 1} \frac{1}{b^{\lfloor n\theta+r\rfloor}}=\sum_{m\ge 0} \frac{c_r(m)}{b^m},
$$
where 
$$
c_r(m):=\#\{n: \lfloor n\theta+r\rfloor=m\}.
$$
An easy calculation shows that $c(m)=\lfloor 1/\theta\rfloor+\delta_r(m)$, where 
$$
\delta_r(m):=\left\{\begin{matrix} 0 & {\text{\rm for}} & \left\{(m-r)/\theta\right\}\in [0,1-\{1/\theta\});\\
1 & {\text{\rm for}} & \left\{(m-r)/\theta\right\} \in (1-\left\{1/\theta\right\},1).\end{matrix}\right.
$$
Since $\theta$ is irrational and $r\in (0,1)$, the end points of the above intervals are not achieved. Hence, 
$$
S(b,\theta,A,{\bf v})=T(b,1/\theta,A',{\bf u}),
$$
where $A'$ is the set of values 
$$
1-(1-r_1)/\theta,~(1-(1-r_2)/\theta,\ldots, 1-(1-r_\ell)/\theta \pmod 1.
$$
Note that if $\theta>1$, then the above numbers are already in $(0,1)$ and they are ordered from small to large. If $\theta<1$, the ordering might be different. Also, since condition \eqref{eq:C} is satisfied, it follows that 
the above numbers are indeed incongruent modulo the lattice ${\mathbb Z}+{\mathbb Z}(1/\theta)$. Let us see what the components of ${\bf u}$ are. For the sake of simplicity we only assume that $\theta>1$. We also assume that $v_i\ne 0$ for all $i=1,\ldots,\ell$ (otherwise, if $v_i=0$ then $r_i$ should not be present).  Then $u_0-u_1=v_1,~u_1-u_2=v_2,\ldots, u_{\ell}-u_{\ell+1}=v_{\ell}$ (and $u_{\ell+1}=0$). In particular, 
$u_{j+1}\ne u_j$ for any $j=0,1,\ldots,\ell+1$. Thus, Theorem \ref{thm:22} follows from Theorem \ref{thm:2}.

\section*{Acknowledgements} We thank Yann Bugeaud for pointing out reference \cite{BH}.


\begin{thebibliography}{9999}
\bibitem{AD05} B. Adamczewski, ``On powers of words occurring in
  binary codings of rotations'', {\it Advances in Applied
    Mathematics\/} {\bf 34} (2005), 1--29. 

\bibitem{BorisA}  B. Adamczewski, ``On the expansion of some
  exponential periods in an integer base'', {\it Mathematische
    Annalen} {\bf 346} (2010), 107--116.
  
\bibitem{AB1} B. Adamczewski and Y. Bugeaud, ``On the complexity of algebraic numbers I, Expansions in integer bases", {\it Ann. Math.\/} {\bf 165} (2007), 547--566.
 
\bibitem{AB2}B. Adamczewski and Y. Bugeaud, ``Nombres r\'eels de complexit\'e sous-lineaire: mesures d'irrationalit\'e et de transcendence", {\it J. reine angew. Math.\/} {\bf 658} (2011), 65--98.

\bibitem{ABL} B. Adamczewski, Y. Bugeaud and F. Luca, ``Sur la complexit\'e des nombres alg\'ebriques", {\it C. R. Acad. Sci. Paris\/} {\bf 339} (2004), 11--14.

\bibitem{Becker}
  P.-G. Becker, ``$k$-regular power series and Mahler-type functional
  equations'',
  {\it J. Number Theory\/}, {\bf 49} (1994), 269–-286.
  
\bibitem{BV}
 J. Berstel  and L. Vuillon, "Coding rotations on
 intervals", {\it Theoretical Computer Science\/} {\bf 281} (2002), 99-107.

\bibitem{BH} Y. Bugeaud, D. H. Kim, M. Laurent and A. Nogueira, ``On the Diophantine nature of the elements of Cantor sets arising in the dynamics of contracted rotations", 
{\it Ann. Sc. Norm. Super. Pisa Cl. Sci. (5) \/} {\bf 22} (2021), 1691--1704

\bibitem{BK}
  J. Bugeaud and D. H. Kim, ``A New Complexity Function, Repetitions
  in Sturmian Words, and Irrationality Exponents of Sturmian
  Numbers'',
  {\it  Trans. Amer. Math. Soc.}  {\bf 371} (2019), 3281--3308.

 
\bibitem{FM} S. Ferenczi and C. Mauduit,  ``Transcendence of Numbers
  with a Low Complexity Expansion'', {\it  Journal of Number 
    Theory \/} {\bf 67} (1997), 146--161.

\bibitem{FijalkowOPP019}
  N. Fijalkow, J. Ouaknine, A. Pouly, J. Sousa Pinto and J. Worrell,
  ``On the decidability of reachability in linear time-invariant
  systems'',
  {\it  Proceedings of the 22nd {ACM} International Conference on Hybrid Systems:
               Computation and Control}, ACM Press (2019), 77--86.

\bibitem{MH} M. Morse and G. A. Hedlund.  ``Symbolic dynamics'', {\it
    Amer. J. Math\/} {\bf 60}, (1938), 815--866.
  
\bibitem{HerLuc} S. Hern\'andez Hern\'andez and F. Luca, ``Palindromic powers", {\it Rev. Colomb. Mat.\/} {\bf 40} (2006), 81--86. 

\bibitem{Khi} A. Khintchine, ``\"Uber eine klasse linearer Diophantischer Approximationen", {\it Rendiconti Circ. Mat. Palermo\/} {\bf 50} (1926), 170--195.

\bibitem{ROTE} G. Rote, ``Sequences with subword complexity $2n$'',
  {\it  Journal of Number Theory\/} {\bf 46} (1994), 196--213.


  
\end{thebibliography}
\end{document}